\providecommand{\U}[1]{\protect\rule{.1in}{.1in}}
\providecommand{\U}[1]{\protect\rule{.1in}{.1in}}
\let\@ORGmakecaption\@makecaption
\long
\def\@makecaption#1#2{\@ORGmakecaption{#1}{#2}\vskip\belowcaptionskip\relax}
\def\@normalsize{\@setsize\normalsize{10pt}\xpt\@xpt
\abovedisplayskip 10pt plus2pt minus5pt\belowdisplayskip
\abovedisplayskip \abovedisplayshortskip \z@
plus3pt\belowdisplayshortskip 6pt plus3pt
minus3pt\let\@listi\@listI}
\newcommand\npar{\@startsection{section}{1pt}}
\newcounter{lemcount}
\newenvironment{lemma}[1][]{\refstepcounter{lemcount}\par  {\it{Lemma~\thelemcount.} #1 }
\rmfamily  }{\par}
\newcommand{\qed}{\nobreak \ifvmode \relax \else
\ifdim\lastskip<1.5em \hskip-\lastskip
\hskip1.5em plus0em minus0.5em \fi \nobreak
\vrule height0.4em width0.5em depth0.25em\fi}
\title{{\LARGE \textbf{Personalized Cancer Therapy Design:
Robustness vs. Optimality}}}
\author{ \parbox{3.5 in}{\centering Julia L. Fleck and Christos G. Cassandras\\
         Division of Systems Engineering\\ and Center for Information and Systems Engineering\\
         Boston University, MA 02446\\
         {\tt\small jfleck@bu.edu,cgc@bu.edu}}
         \thanks{The authors' work is supported in part by NSF under grants CNS-1239021, ECCS-1509084, and IIP-1430145, by AFOSR under grant FA9550-15-1-0471, and by ONR under grant N00014-09-1-1051.}
         \vspace{-4mm}
}
\begin{document}
\maketitle
\thispagestyle{empty}
\pagestyle{empty}
\begin{abstract}
Intermittent Androgen Suppression (IAS) is a treatment strategy for delaying or even preventing time to relapse of advanced prostate cancer. IAS consists of alternating cycles of therapy (in the form of androgen suppression) and off-treatment periods. The level of prostate specific antigen (PSA) in a patient's serum is frequently monitored to determine when the patient will be taken off therapy and when therapy will resume. In spite of extensive recent clinical experience with IAS, the design of an ideal protocol for any given patient remains one of the main challenges associated with effectively implementing this therapy. We use a threshold-based policy for optimal IAS therapy design that is parameterized by lower and upper PSA threshold values and is associated with a cost metric that combines clinically relevant measures of therapy success. We apply Infinitesimal Perturbation Analysis (IPA) to a Stochastic Hybrid Automaton (SHA) model of prostate cancer evolution under IAS and derive unbiased estimators of the cost metric gradient with respect to various model and therapy parameters. These estimators are subsequently used for system analysis. By evaluating sensitivity estimates with respect to several model parameters, we identify critical parameters and demonstrate that relaxing the optimality condition in favor of increased robustness to modeling errors provides an alternative objective to therapy design for at least some patients.
\end{abstract}

\section{Introduction}
Several recent attempts have been made to develop mathematical models that explain the
progression of cancer in patients undergoing therapy so as to improve (and possibly optimize) the effectiveness of such therapy. As an example, prostate cancer is known to be a multistep process, and patients who evolve into a state of metastatic disease are usually submitted to hormone therapy in the form of continuous androgen suppression (CAS) \cite{Harrison2012}. The initial response to CAS is frequently positive, leading to a significant decrease in tumor size; unfortunately, most patients eventually develop resistance and relapse.

Intermittent Androgen Suppression (IAS) is an alternative treatment strategy for delaying or even preventing time to relapse of advanced prostate cancer patients. IAS consists of alternating cycles of therapy (in the form of androgen suppression) and off-treatment periods. The level of prostate specific antigen (PSA) in a patient's serum is frequently monitored to determine when the patient will be taken off therapy and when therapy will resume. In spite of extensive recent clinical experience with IAS, the design of an ideal protocol for any given patient remains one of the main challenges associated with effectively implementing this therapy \cite{Hirata2010}.

Various works have aimed at addressing this challenge, and we briefly review some of them. In \cite{Jackson2004} a model is proposed in which prostate tumors
are composed of two subpopulations of cancer cells, one that is sensitive to
androgen suppression and another that is not, without directly addressing
the issue of IAS therapy design. The authors in \cite{Ideta2008} modeled the evolution of a prostate tumor under IAS using a hybrid dynamical system approach and applied
numerical bifurcation analysis to study the effect of different
therapy protocols on tumor growth and time to relapse. In \cite{Shimada2008} a nonlinear model is developed to explain the competition between different cancer cell
subpopulations, while in \cite{Tao2010} a model based on switched ordinary differential equations is proposed. The authors in \cite{Suzuki2010} developed a piecewise affine system model and formulated the problem of personalized prostate cancer treatment as an optimal control problem. Patient classification is performed in \cite{Hirata2010} using a feedback control system to model the prostate tumor under IAS, and in \cite{Hirata2010B} this work is extended by deriving conditions for patient relapse.

Most of the existing models provide insights into the dynamics of prostate cancer evolution under androgen deprivation, but fail to address the issue of therapy design. Furthermore, previous works that suggest optimal treatment
schemes by classifying patients into groups have been based on more
manageable, albeit less accurate, approaches to nonlinear hybrid dynamical
systems. Addressing this limitation, the authors in \cite{Liu2015} recently proposed a nonlinear hybrid automaton model and performed $\delta $-reachability analysis to identify patient-specific treatment schemes. However, this model did not account for noise and fluctuations inherently associated with cell population dynamics and monitoring of clinical data. In contrast, in \cite{Tanaka2010} a hybrid model of tumor growth under IAS therapy is developed that incorporated stochastic effects, but is not used for personalized therapy design.

A first attempt to define optimal personalized IAS therapy schemes by applying Infinitesimal Perturbation Analysis (IPA) to stochastic models of prostate cancer evolution was reported in \cite{FleckADHS2015}. An IPA-driven gradient-based optimization algorithm was subsequently implemented in \cite{FleckNAHS2016} to adaptively adjust controllable therapy settings so as to improve IAS therapy outcomes. The advantages of these IPA-based approaches stem from the fact that IPA efficiently yields sensitivities with respect to controllable parameters in a therapy (i.e., control policy), which is arguably the ultimate goal of personalized therapy design. More generally, however, IPA yields sensitivity estimates with respect to various model
parameters from actual data, thus allowing critical parameters to be
differentiated from others that are not.

In this paper we build upon the IPA-based methodology from \cite{FleckADHS2015} and \cite{FleckNAHS2016} and focus on the importance of accurate modeling in conjunction with optimal therapy design. In particular, by evaluating sensitivity estimates with respect to several model parameters, we identify critical parameters and verify the extent to which the model from \cite{FleckADHS2015} is robust to them. From a practical perspective, the goal of this paper is to use IPA to explore the tradeoff between system optimality and robustness (or, equivalently, fragility), thus providing valuable insights on modeling and control of cancer progression. Assuming that an underlying, and most likely poorly understood, equilibrium of cancer cell subpopulation dynamics exists at suboptimal therapy settings, we verify that relaxing the optimality condition in favor of increased robustness to modeling errors provides an alternative objective to therapy design for at least some patients.

In Section \ref{Formulation} we present a Stochastic Hybrid Automaton (SHA) model of prostate cancer evolution, along with a threshold-based policy for optimal IAS therapy design.  Section \ref{IPA} reviews a general framework of IPA based on which we derive unbiased IPA estimators for system analysis. In Section \ref{Results} we evaluate sensitivity estimates with respect to several model parameters, identifying critical parameters and verifying the extent to which our SHA model is robust to them. We include final remarks in Section \ref{Conclusions}.

\section{Problem Formulation}
\label{Formulation}

\subsection{Stochastic Model of Prostate Cancer Evolution}

We consider a system composed of a prostate tumor under IAS therapy, which
is modeled as a Stochastic Hybrid Automaton (SHA). Details of the problem
formulation are given in \cite{FleckADHS2015}, but a condensed description of the SHA modeling framework is included here so as to make this paper as self-contained as possible. By adopting a standard SHA definition \cite{Cassandras2008}, a SHA model of
prostate cancer evolution is defined in terms of the following:

A \textbf{discrete state set} $Q=\left\{ q^{ON},q^{OFF}\right\} $, where $%
q^{ON} $ ($q^{OFF}$, respectively) is the on-treatment (off-treatment,
respectively) operational mode of the system. IAS therapy is temporarily suspended when the size of the prostate tumor decreases by a predetermined desirable amount. The reduction in the size of the tumor is estimated in terms of the patient's prostate specific antigen (PSA) level, a biomarker commonly used for monitoring the outcome of hormone therapy. In this context, therapy is suspended when a patient's PSA level reaches a lower threshold value, and reinstated once the size of cancer cell populations has increased considerably, i.e., once the patient's PSA level reaches an upper threshold value.

A \textbf{state space} $X=\left\{ x_{1}\left( t\right) ,x_{2}\left( t\right)
,x_{3}\left( t\right) ,z_{1}\left( t\right) ,z_{2}\left( t\right) \right\} $, defined in terms of the biomarkers commonly monitored during IAS therapy,
as well as \textquotedblleft clock\textquotedblright\ state variables that
measure the time spent by the system in each discrete state. We assume that
prostate tumors are composed of two coexisting subpopulations of cancer
cells, Hormone Sensitive Cells (HSCs) and Castration Resistant Cells (CRCs),
and thus define a state vector $x\left( t\right) =\left[ x_{1}\left(
t\right) ,x_{2}\left( t\right) ,x_{3}\left( t\right) \right] $ with $
x_{i}\left( t\right) \in \mathbb{R}^{+}$, such that $x_{1}\left( t\right) $ is the total population of HSCs, $x_{2}\left( t\right) $ is the total population of CRCs, and $x_{3}\left(t\right) $ is the concentration of androgen in the serum. Prostate cancer cells secrete high levels of PSA, hence a common assumption is that the
serum PSA concentration can be modeled as a linear combination of the cancer
cell subpopulations. It is also frequently assumed that both HSCs and CRCs
secrete PSA equivalently \cite{Ideta2008}, and in this work we adopt these
assumptions. Finally, we define variable $z_{i}\left( t\right) \in \mathbb{R}^{+}$, $i=1,2$, where $z_{1}\left( t\right) $ ($z_{2}\left( t\right) $,
respectively) is the \textquotedblleft clock\textquotedblright\ state
variable corresponding to the time when the system is in state $q^{ON}$ ($
q^{OFF}$, respectively), and is reset to zero every time a state transition
occurs. Setting $z\left( t\right) =\left[ z_{1}\left( t\right) ,z_{2}\left(
t\right) \right] $, the complete state vector is $\left[ x\left( t\right)
,z\left( t\right) \right] $.

An \textbf{admissible control set} $U=\left\{ 0,1\right\} $, such that the
control is defined, at any time $t$, as:

{\small\vspace{-3mm}
\begin{equation}
u\left( x\left( t\right) ,z\left( t\right) \right) \equiv \left\{
\begin{array}{ll}
0 & \text{if }x_{1}\left( t\right) +x_{2}\left( t\right) <\theta _{2}\text{,
}q\left( t\right) =q^{OFF} \\
1 & \text{if }x_{1}\left( t\right) +x_{2}\left( t\right) >\theta _{1}\text{,
}q\left( t\right) =q^{ON}%
\end{array}
\right.  \label{eq: control set}
\end{equation}}
\vspace{-3mm}

This is a simple form of hysteresis control to ensure that androgen
deprivation will be suspended whenever a patient's PSA level drops below a
minimum threshold value, and that treatment will resume once the patient's
PSA\ level reaches a maximum threshold value. To this end, IAS therapy is
viewed as a controlled process characterized by two parameters: $\tilde{\theta} =
\left[ \tilde{\theta} _{1},\tilde{\theta} _{2}\right] \in \Theta $, where $\tilde{\theta} _{1}\in
\left[ \tilde{\theta} _{1}^{\min },\tilde{\theta} _{1}^{\max }\right] $ is the lower
threshold value of the patient's PSA level, and $\tilde{\theta} _{2}\in \left[
\tilde{\theta} _{2}^{\min },\tilde{\theta} _{2}^{\max }\right] $ is the upper threshold
value of the patient's PSA level, with $\tilde{\theta} _{1}^{\max }<\tilde{\theta}
_{2}^{\min }$. An illustrative representation of such threshold-based IAS
therapy scheme is depicted in Fig. \ref{fig: IAS}. Simulation driven by
clinical data \cite{Bruchovsky2006},\cite{Bruchovsky2007} was performed to
generate the plot in Fig. \ref{fig: IAS}, which shows a typical profile of
PSA\ level variations along several treatment cycles.

\begin{figure}
[tbh]
\begin{center}
\includegraphics[
natheight=2.9689in,
natwidth=5.8954in,
height=1.5in,
width=2.9in
]%
{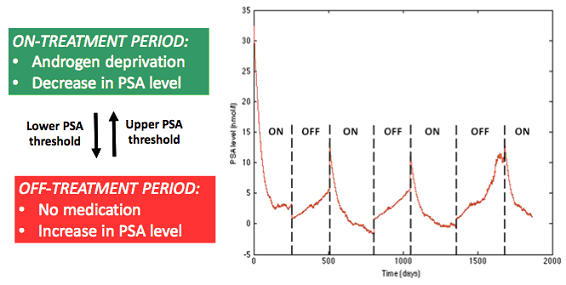}%
\caption{Schematic representation of Intermittent Androgen Suppression (IAS) therapy}%
\label{fig: IAS}%
\end{center}
\end{figure}

An \textbf{event set} $E=\left\{ e_{1},e_{2}\right\} $, where $e_{1}$
corresponds to the condition $\left[ x_{1}\left( t\right) +x_{2}\left(
t\right) =\theta _{1}\text{ from above}\right] $ (i.e., $x_{1}\left(
t^{-}\right) +x_{2}\left( t^{-}\right) >\theta _{1}$) and $e_{2}$
corresponds to the condition $\left[ x_{1}\left( t\right) +x_{2}\left(
t\right) =\theta _{2}\text{ from below}\right] $ (i.e., $x_{1}\left(
t^{-}\right) +x_{2}\left( t^{-}\right) <\theta _{2}$), where the notation $%
t^{-}$ indicates the time instant immediately preceding time $t$.

\textbf{System dynamics} describing the evolution of continuous state variables
over time, as well as the rules for discrete state transitions. The \emph{
continuous (time-driven) dynamics} capture the prostate cancer cell
population dynamics, which are defined in terms of their proliferation,
apoptosis, and conversion rates. As in \cite{FleckADHS2015}, we incorporate
stochastic effects into the deterministic model from \cite{Liu2015} as
follows:

{\small \vspace{-3mm}
\begin{equation}
\begin{array}
[c]{ll}
\dot{x}_{1}(t) & =\alpha_{1}\left[  1+e^{-\left(  x_{3}(t)-k_{1}\right)
k_{2}}\right]  ^{-1}\cdot x_{1}(t)\\
& -\beta_{1}\left[  1+e^{-\left(  x_{3}(t)-k_{3}\right)  k_{4}}\right]
^{-1}\cdot x_{1}(t)\\
& -\left[  m_{1}\left(  1-\frac{x_{3}(t)}{x_{3,0}}\right)  +\lambda
_{1}\right]  \cdot x_{1}(t)\\
& +\mu_{1}+\zeta_{1}(t)
\end{array}
\label{eq: HSC dynamics}
\end{equation}
}

{\small \vspace{-3mm}
\begin{equation}
\begin{array}{ll}
\dot{x}_{2}(t)= & \left[ \alpha _{2}\left( 1-d\frac{x_{3}(t)}{x_{3,0}}%
\right) -\beta _{2}\right] x_{2}(t) \\
& +m_{1}\left( 1-\frac{x_{3}(t)}{x_{3,0}}\right) x_{1}(t)+\zeta _{2}(t)%
\end{array}
\label{eq: CRC dynamics}
\end{equation}}

{\small \vspace{-3mm}
\begin{equation}
\dot{x}_{3}(t)=\left\{
\begin{array}{ll}
-\frac{x_{3}(t)}{\sigma }+\mu _{3}+\zeta _{3}(t) &
\begin{array}{l}
\text{if }x_{1}(t)+x_{2}(t)>\theta _{1} \\
\text{and }q(t)=q^{ON}%
\end{array}%
\text{ } \\
\frac{x_{3,0}-x_{3}(t)}{\sigma }+\mu _{3}+\zeta _{3}(t) &
\begin{array}{l}
\text{if }x_{1}(t)+x_{2}(t)<\theta _{2}\text{ } \\
\text{and }q(t)=q^{OFF}%
\end{array}%
\end{array}%
\right.  \label{eq: x3 dynamics}
\end{equation}}

{\small
\begin{align}
\dot{z}_{1}(t)& =\left\{
\begin{array}{ll}
1 & \text{if }q(t)=q^{ON} \\
0 & \text{otherwise}%
\end{array}%
\right.  \label{eq: z1 dynamics} \\
z_{1}(t^{+})& =%
\begin{array}[t]{ll}
0 &
\begin{array}{l}
\text{if }x_{1}(t)+x_{2}(t)=\theta _{1} \\
\text{and }q(t)=q^{ON}%
\end{array}%
\text{ }%
\end{array}
\notag
\end{align}%
\begin{align}
\dot{z}_{2}(t)& =\left\{
\begin{array}{ll}
1 & \text{if }q(t)=q^{OFF} \\
0 & \text{otherwise}%
\end{array}%
\right.  \label{eq: z2 dynamics} \\
z_{2}(t^{+})& =%
\begin{array}[t]{ll}
0 &
\begin{array}{l}
\text{if }x_{1}(t)+x_{2}(t)=\theta _{2} \\
\text{and }q(t)=q^{OFF}\text{ }%
\end{array}%
\end{array}
\notag
\end{align}} where $\alpha _{1}$ and $\alpha _{2}$ are the HSC proliferation constant and CRC proliferation constant, respectively; $\beta _{1}$ and $\beta _{2}$ are
the HSC apoptosis constant and CRC apoptosis constant, respectively; $k_{1}$
through $k_{4}$ are HSC proliferation and apoptosis exponential constants; $
m_{1}$ is the HSC to CRC conversion constant; $x_{3,0}$ corresponds to the
patient-specific androgen constant; $\sigma $ is the androgen degradation
constant; $\lambda _{1}$ is the HSC basal degradation rate; $\mu _{1}$ and $
\mu _{3}$ are the HSC basal production rate and androgen basal production
rate, respectively. Finally, $\{\zeta _{i}(t)\}$, $i=1,2,3$, are stochastic
processes which we allow to have arbitrary characteristics and only assume
them to be piecewise continuous w.p. 1. The processes $\{\zeta _{i}(t)\}$, $
i=1,2$, represent noise and fluctuations inherently associated with cell
population dynamics, while $\{\zeta _{3}(t)\}$ reflects randomness
associated with monitoring clinical data, more specifically, with monitoring
the patient's androgen level.

It is clear from (\ref{eq: HSC dynamics})-(\ref{eq: x3 dynamics}) that $
x_{1}\left( t\right) $ and $x_{2}\left( t\right) $ are dependent on $
x_{3}\left( t\right) $, whose dynamics are affected by mode transitions. To
make explicit the dependence of $x_{1}\left( t\right) $ and $x_{2}\left(
t\right) $ on the discrete state (mode) $q\left( t\right) $, we let $\tau
_{k}\left( \tilde{\theta} \right) $ be the time of occurrence of the $k$th event (of
any type), and denote the state dynamics over any interevent interval $\left[
\tau _{k}\left( \tilde{\theta} \right) ,\tau _{k+1}\left( \tilde{\theta} \right) \right) $ as
\begin{equation*}
\dot{x}_{n}(t)=f_{k}^{x_{n}}(t)\text{, }\dot{z}_{i}(t)=f_{k}^{z_{i}}(t)\text{
, }n=1,\ldots ,3\text{, }i=1,2
\end{equation*}
We include $\tilde{\theta} $ as an argument to stress the dependence of the event
times on the controllable parameters, but we will subsequently drop this for
ease of notation as long as no confusion arises.

We thus start by assuming $q(t)=q^{ON}$ for $t\in $ $\left[ \tau _{k},\tau
_{k+1}\right) $. Solving (\ref{eq: x3 dynamics}) yields, for $t\in $ $\left[
\tau _{k},\tau _{k+1}\right) $,

{\small \vspace{-3mm}
\begin{equation*}
\begin{array}{ll}
x_{3}(t) & =x_{3}(\tau _{k}^{+})e^{-\left( t-\tau _{k}\right) /\sigma } \\
& +e^{-t/\sigma }\cdot \int_{\tau _{k}}^{t}e^{\varepsilon /\sigma }\left[
\mu _{3}+\zeta _{3}(\varepsilon )\right] d\varepsilon
\end{array}
\end{equation*}}
It is then possible to define, for $t\in $ $\left[ \tau _{k},\tau
_{k+1}\right) $,

{\small \vspace{-3mm}
\begin{equation}
\begin{array}{ll}
h^{ON}\left( t,\tilde{\zeta}_{3}(t)\right) & \equiv x_{3}(\tau
_{k}^{+})e^{-\left( t-\tau _{k}\right) /\sigma } \\
& +\mu _{3}\sigma \lbrack 1-e^{-\left( t-\tau _{k}\right) /\sigma }]+\tilde{
\zeta}_{3}(t)
\end{array}
\label{eq: h_ON}
\end{equation}}
where, for notational simplicity, we let

{\small \vspace{-3mm}
\begin{equation}
\tilde{\zeta}_{3}(t)=\int_{\tau _{k}}^{t}e^{-\left( t-\varepsilon \right)
/\sigma }\zeta _{3}(\varepsilon )d\varepsilon  \label{eq: zeta3_tilda}
\end{equation}}
Next, let $q(t)=q^{OFF}$ for $t\in $ $\left[ \tau _{k},\tau _{k+1}\right) $,
so that (\ref{eq: x3 dynamics}) implies that, for $t\in $ $\left[ \tau
_{k},\tau _{k+1}\right) $,

{\small \vspace{-3mm}
\begin{equation*}
\begin{array}{ll}
x_{3}(t) & =x_{3}(\tau _{k}^{+})e^{-\left( t-\tau _{k}\right) /\sigma } \\
& +(\mu _{3}\sigma +x_{3,0})[1-e^{-\left( t-\tau _{k}\right) /\sigma }]+
\tilde{\zeta}_{3}(t)
\end{array}
\end{equation*}}
Similarly as above, we define, for $t\in $ $\left[ \tau _{k},\tau
_{k+1}\right) $,

{\small \vspace{-3mm}
\begin{equation}
\begin{array}{ll}
h^{OFF}\left( t,\tilde{\zeta}_{3}(t)\right) & \equiv x_{3}(\tau
_{k}^{+})e^{-\left( t-\tau _{k}\right) /\sigma } \\
& +(\mu _{3}\sigma +x_{3,0})[1-e^{-\left( t-\tau _{k}\right) /\sigma }]+
\tilde{\zeta}_{3}(t)
\end{array}
\label{eq: h_OFF}
\end{equation}}
It is then possible to rewrite (\ref{eq: x3 dynamics}) as follows:

{\small \vspace{-3mm}
\begin{equation*}
x_{3}(t)=\left\{
\begin{array}{ll}
h^{ON}\left( t,\tilde{\zeta}_{3}(t)\right) & \text{if }q(t)=q^{ON} \\
h^{OFF}\left( t,\tilde{\zeta}_{3}(t)\right) & \text{if }q(t)=q^{OFF}
\end{array}
\right.
\end{equation*}}
Although we include $\tilde{\zeta}_{3}(t)$ as an argument in (\ref{eq: h_ON}
) and (\ref{eq: h_OFF}) to stress the dependence on the stochastic process,
we will subsequently drop this for ease of notation as long as no confusion
arises. Hence, substituting (\ref{eq: h_ON}) and (\ref{eq: h_OFF}) into (\ref
{eq: HSC dynamics})-(\ref{eq: CRC dynamics}), yields

{\small \vspace{-3mm}
\begin{equation}
\dot{x}_{1}(t)=\left\{
\begin{array}{l}
\begin{array}{l}
\left\{ \alpha _{1}\left[ 1+\phi _{\alpha }^{ON}(t)\right] ^{-1}-\beta _{1}%
\left[ 1+\phi _{\beta }^{ON}(t)\right] ^{-1}\right. \\
\left. +m_{1}\left( \frac{h^{ON}\left( t\right) }{x_{3,0}}\right)
-(m_{1}+\lambda _{1})\right\} \cdot x_{1}(t) \\
+\mu _{1}+\zeta _{1}(t)\text{ \ \ \ \ \ \ \ \ \ \ \ \ \ \ \ \ \ \ \ \ \ \ \
\ if }q(t)=q^{ON}%
\end{array}
\\
\begin{array}{l}
\left\{ \alpha _{1}\left[ 1+\phi _{\alpha }^{OFF}(t)\right] ^{-1}-\beta _{1}%
\left[ 1+\phi _{\beta }^{OFF}(t)\right] ^{-1}\right. \\
\left. +m_{1}\left( \frac{h^{OFF}\left( t\right) }{x_{3,0}}\right)
-(m_{1}+\lambda _{1})\right\} \cdot x_{1}(t) \\
+\mu _{1}+\zeta _{1}(t)\text{ \ \ \ \ \ \ \ \ \ \ \ \ \ \ \ \ \ \ \ \ \ \ \
\ if }q(t)=q^{OFF}
\end{array}
\end{array}
\right.  \label{eq: x1 dynamics}
\end{equation}}

{\small \vspace{-3mm}
\begin{equation}
\dot{x}_{2}(t)=\left\{
\begin{array}{l}
\begin{array}{l}
\left[ \alpha _{2}\left( 1-d\frac{h^{ON}\left( t\right) }{x_{3,0}}\right)
-\beta _{2}\right] x_{2}(t) \\
+m_{1}\left( 1-\frac{h^{ON}\left( t\right) }{x_{3,0}}\right) x_{1}(t)+\zeta
_{2}(t) \\
\text{ \ \ \ \ \ \ \ \ \ \ \ \ \ \ \ \ \ \ \ \ \ \ \ \ \ \ \ \ if }
q(t)=q^{ON}
\end{array}
\\
\begin{array}{l}
\left[ \alpha _{2}\left( 1-d\frac{h^{OFF}\left( t\right) }{x_{3,0}}\right)
-\beta _{2}\right] x_{2}(t) \\
+m_{1}\left( 1-\frac{h^{OFF}\left( t\right) }{x_{3,0}}\right) x_{1}(t)+\zeta
_{2}(t) \\
\text{ \ \ \ \ \ \ \ \ \ \ \ \ \ \ \ \ \ \ \ \ \ \ \ \ \ \ \ if }q(t)=q^{OFF}
\end{array}
\end{array}
\right.  \label{eq: x2 dynamics}
\end{equation}}
with

{\small \vspace{-3mm}
\begin{align*}
\phi _{\alpha }^{ON}(t)& =e^{-\left( h^{ON}\left( t\right) -k_{1}\right)
k_{2}} \\
\phi _{\beta }^{ON}(t)& =e^{-\left( h^{ON}\left( t\right) -k_{3}\right)
k_{4}} \\
\phi _{\alpha }^{OFF}(t)& =e^{-\left( h^{OFF}\left( t\right) -k_{1}\right)
k_{2}} \\
\phi _{\beta }^{OFF}(t)& =e^{-\left( h^{OFF}\left( t\right) -k_{3}\right)
k_{4}}
\end{align*}}

The \emph{discrete (event-driven) dynamics} are dictated by the occurrence
of events that cause state transitions. Based on the event set $E=\left\{
e_{1},e_{2}\right\} $ we have defined, the occurrence of $e_{1}$ results in
a transition from $q^{ON}$ to $q^{OFF}$ and the occurrence of $e_{2}$
results in a transition from $q^{OFF}$ to $q^{ON}$.

\subsection{IAS Sensitivity Analysis}

Recall that the main goal of this work is to perform sensitivity analysis of (\ref
{eq: HSC dynamics})-(\ref{eq: z2 dynamics}) in order to identify critical model parameters and verify the extent to which the SHA model of prostate cancer evolution is robust to them. Of note, several potentially critical parameters
exist in the SHA model from \cite{FleckADHS2015}. This work is a first step towards analyzing their relative importance, in which we select a subset of all model parameters in order to illustrate the applicability of our IPA-based methodology. The parameters we consider here are $\alpha _{1}$ and $\alpha _{2}$ (HSC proliferation constant and CRC proliferation constant, respectively), as well as $\beta _{1}$ and $\beta _{2}$ (HSC
apoptosis constant and CRC apoptosis constant, respectively). These
constants are intrinsically related to the cancer cell subpopulations' net
growth rate, whose value dictates how fast the PSA threshold values will be
reached, and ultimately how soon treatment will be suspended or reinstated.
As a result, correctly estimating the values of $\alpha _{i}$ and $\beta
_{i} $, $i=1,2$, is presumably crucial for the purposes of personalized IAS
therapy design.

In this context, we define an extended parameter vector $\mathbf{\tilde{\theta }}=\left[
\tilde{\theta } _{1},\ldots ,\tilde{\theta } _{6}\right] $, where $\tilde{\theta } _{1}$ ($\tilde{\theta } _{2}$, respectively) corresponds to the lower (upper, respectively) threshold value of the patient's PSA\ level, $\tilde{\theta } _{3}$ ($\tilde{\theta } _{4}$, respectively) corresponds to the HSC (CRC, respectively) proliferation constant, and $\tilde{\theta } _{5}$ ($\tilde{\theta } _{6}$, respectively) corresponds to the HSC (CRC, respectively) apoptosis constant.

Within the SHA framework presented above, an IAS therapy can be
viewed as a controlled process $u\left( \tilde{\theta} ,t\right) $ characterized by
the parameter vector $\tilde{\theta} $, as in (\ref{eq: control set}), whose effect
can be quantified in terms of performance metrics of the form $J\left[
u\left( \tilde{\theta} ,t\right) \right] $. Of note, only the first two elements in vector $\tilde{\theta} $ are controllable, while the remaining parameters are not.

As in \cite{FleckADHS2015}, here we make use of a sample function defined in
terms of complementary measures of therapy success. In particular, we
consider the most adequate IAS treatment schemes to be those that $(i)$
ensure PSA levels are kept as low as possible; $(ii)$ reduce the frequency
of on and off-treatment cycles. From a practical perspective, $(i)$
translates into the ability to successfully keep the size of cancer cell
populations under control, which is directly influenced by the duration of
the on and off-treatment periods. On the other hand, $(ii)$ aims at reducing
the duration of on-treatment periods, thus decreasing the exposure of
patients to medication and their side effects, and consequently improving
the patients' quality of life throughout the treatment. Clearly
there is a trade-off between keeping tumor growth under control and the cost
associated with the corresponding IAS therapy. The latter is related to the
duration of the therapy and could potentially include fixed set up costs
incurred when therapy is reinstated. For simplicity, we disconsider fixed
set up costs and take $(ii)$ to be linearly proportional to the length of
the on-treatment cycles. Hence, we define our sample function as the sum of
the average PSA level and the average duration of an on-treatment cycle over
a fixed time interval $\left[ 0,T\right] $. We also take into account that
it may be desirable to design a therapy scheme which favors $(i)$ over $(ii)$
(or vice-versa) and thus associate weight $W$ with $(i)$ and $1-W$ with $%
(ii) $, where $0\leq W\leq 1$. Finally, to ensure that the trade-off between
$(i)$ and $(ii)$ is captured appropriately, we normalize our sample
function: we divide $(i)$ by the value of the patient's PSA level at the
start of the first on-treatment cycle ($PSA_{init}$), and normalize $(ii)$
by $T$.

Recall that the total population size of prostate cancer cells is assumed to
reflect the serum PSA concentration, and that we have defined clock
variables which measure the time elapsed in each of the treatment modes, so
that our sample function can be written as

{\small \vspace{-3mm}
\begin{equation}
L\left( \theta ,x(0),z(0),T\right) =%
\begin{array}[t]{l}
\frac{W}{T}\overset
{T}{\underset{0}{\int}}\left[ \frac{x_{1}\left( \theta ,t\right)
+x_{2}\left( \theta ,t\right) }{PSA_{init}}\right] dt \\
+\frac{(1-W)}{T}\overset
{T}{\underset{0}{\int}}\frac{z_{1}\left( t\right) }{T}dt%
\end{array}
\label{eq: sample function}
\end{equation}}
where $x(0)$ and $z(0)$ are given initial conditions. We can then define the
overall performance metric as

{\small \vspace{-3mm}
\begin{equation}
J\left( \tilde{\theta} ,x(0),z(0),T\right) =E\left[ L\left( \tilde{\theta}
,x(0),z(0),T\right) \right]  \label{eq: performance metric}
\end{equation}}

We note that it is not possible to derive a closed-form expression of $%
J\left( \tilde{\theta} ,x(0),z(0),T\right)$ without imposing limitations on the
processes $\{\zeta _{i}(t)\}$, $i=1,\ldots ,3$. Nevertheless, by assuming
only that $\zeta _{i}(t)$, $i=1,\ldots ,3$, are piecewise continuous w.p. 1,
we can successfully apply the IPA methodology developed for general SHS in
\cite{Cassandras2010} and obtain an estimate of $\nabla J\left( \tilde{\theta}
\right) $ by evaluating the sample gradient $\nabla L\left( \tilde{\theta} \right) $. We will assume that the
derivatives $dL\left( \tilde{\theta} \right) /d\tilde{\theta} _{i}$ exist w.p. 1 for all $
\tilde{\theta} _{i}\in \mathbb{R}^{+}$. It is also simple to verify that $L\left( \tilde{\theta} \right) $
is Lipschitz continuous for $\tilde{\theta} _{i}$ in $\mathbb{R}^{+}$. We will further assume that $\{\zeta _{i}(t)\}$, $i=1,\ldots ,3$, are stationary random processes over $\left[ 0,T\right] $ and that no two events can occur at the same time w.p. 1. Under these conditions, it has been shown in \cite{Cassandras2010} that $dL\left( \tilde{\theta} \right) /d\tilde{\theta} _{i}$ is an
\emph{unbiased} estimator of $dJ\left( \tilde{\theta} \right) /d\tilde{\theta} _{i}$, $i=1,\ldots ,6$. Hence, our goal is to compute the sample gradient $\nabla L\left( \tilde{\theta}
\right) $ using data extracted from a sample path of the system (e.g., by
simulating a sample path of our SHA\ model using clinical data), and use
this value as an estimate of $\nabla J\left( \tilde{\theta} \right) $.

\section{Infinitesimal Perturbation Analysis}
\label{IPA}

For completeness, we provide here a brief overview of the IPA
framework developed for stochastic hybrid systems in \cite{Cassandras2010}.
For such, we adopt a standard SHA definition \cite{Cassandras2008}:

{\small \vspace{-3mm}
\begin{equation}
G_{h}=\left( Q,X,E,U,f,\phi ,Inv,guard,\rho ,q_{0},x_{0}\right)
\label{eq: SHA}
\end{equation}}
where $Q$ is a set of discrete states; $X$ is a continuous state space; $E$
is a finite set of events; $U$ is a set of admissible controls; $f$ is a
vector field, $f:Q\times X\times U\rightarrow X$; $\phi $ is a discrete
state transition function, $\phi :Q\times X\times E\rightarrow Q$; $Inv$ is
a set defining an invariant condition (when this condition is violated at
some $q\in Q$, a transition must occur); $guard$ is a set defining a guard
condition, $guard\subseteq Q\times Q\times X$ (when this condition is
satisfied at some $q\in Q$, a transition is allowed to occur); $\rho $ is a
reset function, $\rho :Q\times Q\times X\times E\rightarrow X$; $q_{0}$ is
an initial discrete state; $x_{0}$ is an initial continuous state.

Consider a sample path of the system over $\left[ 0,T\right] $ and denote
the time of occurrence of the $k$th event (of any type) by $\tau _{k}\left(
\theta \right) $, where $\theta $ corresponds to the control parameter of
interest. Although we use the notation $\tau _{k}\left( \theta \right) $ to
stress the dependency of the event time on the control parameter, we will
subsequently use $\tau _{k}$ to indicate the time of occurrence of the $k$th
event where no confusion arises. In order to further simplify notation, we
shall denote the state and event time derivatives with respect to parameter $%
\theta $ as $x^{\prime }(t)\equiv \frac{\partial x(\theta ,t)}{\partial
\theta }$ and $\tau _{k}^{\prime }\equiv \frac{\partial \tau _{k}}{\partial
\theta }$, respectively, for $k=1,...,N$. Additionally, considering that the
system is at some discrete mode during an interval $\left[ \tau _{k},\tau
_{k+1}\right) $, we will denote its time-driven dynamics over such interval
as $f_{k}\left( x,\theta ,t\right) $. It is shown in \cite{Cassandras2010}
that the state derivative satisfies

{\small \vspace{-3mm}
\begin{equation}
\frac{d}{dt}x^{\prime }(t)=\frac{\partial f_{k}(t)}{\partial x}x^{\prime
}(t)+\frac{\partial f_{k}(t)}{\partial \theta }
\label{(eq): General state derivative}
\end{equation}}
with the following boundary condition:

{\small \vspace{-3mm}
\begin{equation}
x^{\prime }(\tau _{k}^{+})=x^{\prime }(\tau _{k}^{-})+\left[ f_{k-1}(\tau
_{k}^{-})-f_{k}(\tau _{k}^{+})\right] .\tau _{k}^{\prime }
\label{(eq): Boundary condition}
\end{equation}}
when $x(\theta ,t)$ is continuous in $t$ at $t=\tau _{k}$. Otherwise,

{\small \vspace{-3mm}
\begin{equation}
x^{\prime }(\tau _{k}^{+})=\frac{d\rho \left( q,q^{\prime },x,e\right) }{
d\theta }  \label{eq: Reset fn.}
\end{equation}}
where $\rho \left( q,q^{\prime },x,e\right) $ is the reset function defined
in (\ref{eq: SHA}).

Knowledge of $\tau _{k}^{\prime }$ is, therefore, needed in order to
evaluate (\ref{(eq): Boundary condition}). Following the framework in \cite
{Cassandras2010}, there are three types of events for a general stochastic hybrid system: \textit{(i) Exogenous event}. This type of event causes a discrete
state transition which is independent of parameter $\theta $ and, as a
result, $\tau _{k}^{\prime }=0$. \textit{(ii) Endogenous event}. In this case, there exists a
continuously differentiable function $g_{k}:\Re ^{n}\times \Theta
\rightarrow \Re $ such that $\tau _{k}=\min \left\{ t>\tau
_{k-1}:g_{k}\left( x(\theta ,t),\theta \right) =0\right\} $, which leads to
{\small
\begin{equation}
\tau _{k}^{\prime }=-\left[ \frac{\partial g_{k}}{\partial x}.f_{k-1}(\tau
_{k}^{-})\right] ^{-1}.\left( \frac{\partial g_{k}}{\partial \phi }+\frac{
\partial g_{k}}{\partial x}.x^{\prime }(\tau _{k}^{-})\right)
\label{(eq): Endogenous event}
\end{equation}}
where $\frac{\partial g_{k}}{\partial x}.f_{k-1}(\tau _{k}^{-})\neq 0$. \textit{(iii) Induced event}. Such an event is triggered by the
occurrence of another event at time $\tau _{m}\leq \tau _{k}$ and the
expression of $\tau _{k}^{\prime }$ depends on the event time derivative of
the triggering event ($\tau _{m}^{\prime }$) (details can be found in \cite
{Cassandras2010}).

Thus, IPA captures how changes in $\theta $ affect the event times and the
state of the system. Since interesting performance metrics are usually
expressed in terms of $\tau _{k}$ and $x(t)$, IPA can ultimately be used to
infer the effect that a perturbation in $\theta $ will have on such metrics. We end this overview by returning to our problem of personalized prostate cancer therapy design and thus defining the derivatives of the states $x_{n}(\tilde{\theta } ,t)$ and $z_{j}(\tilde{\theta } ,t)$ and
event times $\tau _{k}(\tilde{\theta } )$ with respect to $\tilde{\theta } _{i}$, $i=1,\ldots
,6$, $j=1,2$, $n=1,\ldots ,3$, as follows:

{\small \vspace{-3mm}
\begin{equation}
x_{n,i}^{\prime }(t)\equiv \frac{\partial x_{n}(\tilde{\theta},t)}{\partial
\tilde{\theta}_{i}}\text{, \ }z_{j,i}^{\prime }(t)\equiv \frac{\partial
z_{j}(\tilde{\theta},t)}{\partial \tilde{\theta}_{i}},\text{ }\tau
_{k,i}^{\prime }\equiv \frac{\partial \tau _{k}(\tilde{\theta})}{\partial
\tilde{\theta}_{i}}  \label{eq: Deriv def}
\end{equation}}
\vspace{-3mm}

In what follows, we derive the IPA state and event time derivatives for the
events identified in the SHA model of prostate cancer progression.

\subsection{State and Event Time Derivatives}

We proceed by analyzing the state evolution of our SHA\ model of prostate
cancer progression considering each of the states ($q^{ON}$ and $q^{OFF}$)
and events ($e_{1}$ and $e_{2}$) therein defined.

\emph{1. The system is in state }$q^{ON}$\emph{\ over interevent time
interval }$\left[ \tau _{k},\tau _{k+1}\right) $. Using (\ref{(eq): General
state derivative}) for $x_{1}\left( t\right) $, we obtain, for $i=1,\ldots ,6
$,

{\small \vspace{-3mm}
\begin{equation*}
\begin{array}{ll}
\frac{d}{dt}x_{1,i}^{\prime }(t) & =\frac{\partial f_{k}^{x_{1}}(t)}{%
\partial x_{1}}x_{1}^{\prime }(t)+\frac{\partial f_{k}^{x_{1}}(t)}{\partial
x_{2}}x_{2}^{\prime }(t) \\
& +\frac{\partial f_{k}^{x_{1}}(t)}{\partial z_{1}}z_{1}^{\prime }(t)+\frac{%
\partial f_{k}^{x_{1}}(t)}{\partial z_{2}}z_{2}^{\prime }(t)+\frac{\partial
f_{k}^{x_{1}}(t)}{\partial \tilde{\theta}_{i}}%
\end{array}%
\end{equation*}}
From (\ref{eq: x1 dynamics}), we have $\frac{\partial f_{k}^{x_{1}}(t)}{%
\partial x_{2}}=\frac{\partial f_{k}^{x_{1}}(t)}{\partial z_{j}}=\frac{%
\partial f_{k}^{x_{1}}(t)}{\partial \tilde{\theta}_{i}}=0$, $i=1,2,4,6$, $%
j=1,2$, and

{\small \vspace{-3mm}
\begin{eqnarray*}
&&%
\begin{array}{ll}
\frac{\partial f_{k}^{x_{1}}(t)}{\partial x_{1}} & =\alpha _{1}\left[ 1+\phi
_{\alpha }^{ON}(t)\right] ^{-1}-\beta _{1}\left[ 1+\phi _{\beta }^{ON}(t)%
\right] ^{-1} \\
& -m_{1}\left( 1-\frac{h^{ON}\left( t\right) }{x_{3,0}}\right) -\lambda _{1}%
\end{array}
\\
&&%
\begin{array}{ll}
\frac{\partial f_{k}^{x_{1}}(t)}{\partial \tilde{\theta}_{3}} & =x_{1}\left[
1+\phi _{\alpha }^{ON}(t)\right] ^{-1}%
\end{array}
\\
&&%
\begin{array}{ll}
\frac{\partial f_{k}^{x_{1}}(t)}{\partial \tilde{\theta}_{5}} & =-x_{1}\left[
1+\phi _{\beta }^{ON}(t)\right] ^{-1}%
\end{array}%
\end{eqnarray*}}
It is thus simple to verify that solving (\ref{(eq): General state
derivative}) for $x_{1,i}^{\prime }(t)$ yields, for $t\in \left[ \tau
_{k},\tau _{k+1}\right) $,

{\small \vspace{-3mm}
\begin{equation}
x_{1,i}^{\prime }(t)=x_{1,i}^{\prime }(\tau _{k}^{+})e^{A_{1}\left( t\right)
}\text{, \ }i=1,2,4,6  \label{eq: x1_prime ON}
\end{equation}}
{\small \vspace{-3mm}
\begin{equation}
x_{1,3}^{\prime }(t)=x_{1,3}^{\prime }(\tau _{k}^{+})e^{A_{1}\left( t\right)
}+A_{2}\left( t\right)   \label{eq: x13_prime ON}
\end{equation}}
{\small \vspace{-3mm}
\begin{equation}
x_{1,5}^{\prime }(t)=x_{1,5}^{\prime }(\tau _{k}^{+})e^{A_{1}\left( t\right)
}+A_{3}\left( t\right)   \label{eq: x15_prime ON}
\end{equation}}
with

{\small \vspace{-3mm}
\begin{eqnarray}
&&%
\begin{array}{l}
A_{1}\left( t\right) \equiv \int_{\tau _{k}}^{t}\left[ \frac{\alpha _{1}}{%
1+\phi _{\alpha }^{ON}(t)}-\frac{\beta _{1}}{1+\phi _{\beta }^{ON}(t)}\right]
dt \\
\text{ \ }-\int_{\tau _{k}}^{t}\frac{m_{1}}{x_{3,0}}h^{ON}\left( t\right)
dt-\left( m_{1}+\lambda _{1}\right) \left( t-\tau _{k}\right)
\end{array}
\label{eq: A(t)} \\
&&A_{2}\left( t\right) \equiv e^{A_{1}\left( t\right) }\int_{\tau _{k}}^{t}
\left[ \frac{x_{1}\left( t\right) }{1+\phi _{\alpha }^{ON}(t)}%
e^{-A_{1}\left( t\right) }\right] dt \\
&&A_{3}\left( t\right) \equiv e^{A_{1}\left( t\right) }\int_{\tau _{k}}^{t}
\left[ -\frac{x_{1}\left( t\right) }{1+\phi _{\beta }^{ON}(t)}e^{A_{1}\left(
t\right) }\right] dt
\end{eqnarray}}
In particular, at $\tau _{k+1}^{-}$:

{\small
\begin{equation}
x_{1,i}^{\prime }(\tau _{k+1}^{-})=x_{1,i}^{\prime }(\tau
_{k}^{+})e^{A\left( \tau _{k}\right) }  \label{eq: Deriv x1 t1}
\end{equation}}
{\small \vspace{-3mm}
\begin{equation}
x_{1,3}^{\prime }(\tau _{k+1}^{-})=x_{1,3}^{\prime }(\tau
_{k}^{+})e^{A\left( \tau _{k}\right) }+A_{2}\left( \tau _{k}\right)
\label{eq: Deriv x13 t1}
\end{equation}}
{\small \vspace{-3mm}
\begin{equation}
x_{1,5}^{\prime }(\tau _{k+1}^{-})=x_{1,5}^{\prime }(\tau
_{k}^{+})e^{A\left( \tau _{k}\right) }+A_{3}\left( \tau _{k}\right)
\label{eq: Deriv x15 t1}
\end{equation}}
where {\small $A_{1}\left( \tau _{k}\right) $}, {\small $A_{2}\left( \tau _{k}\right) $}, and {\small $
A_{3}\left( \tau _{k}\right) $} are given from (\ref{eq: A(t)}).

Similarly for $x_{2}\left( t\right) $, we have from (\ref{eq: x2 dynamics})
that $\frac{\partial f_{k}^{x_{2}}(t)}{\partial z_{j}}=\frac{\partial
f_{k}^{x_{2}}(t)}{\partial \tilde{\theta}_{i}}=0$, $i=1,2,3,5$, $j=1,2$, and

{\small \vspace{-3mm}
\begin{equation*}
\begin{array}{l}
\frac{\partial f_{k}^{x_{2}}(t)}{\partial x_{1}}=m_{1}\left( 1-\frac{%
h^{ON}\left( t\right) }{x_{3,0}}\right)  \\
\frac{\partial f_{k}^{x_{2}}(t)}{\partial x_{2}}=\alpha _{2}\left( 1-d\frac{%
h^{ON}\left( t\right) }{x_{3,0}}\right) -\beta _{2} \\
\frac{\partial f_{k}^{x_{2}}(t)}{\partial \tilde{\theta}_{4}}=\left( 1-d%
\frac{h^{ON}\left( t\right) }{x_{3,0}}\right) x_{2}\left( t\right)  \\
\frac{\partial f_{k}^{x_{2}}(t)}{\partial \tilde{\theta}_{6}}=-x_{2}\left(
t\right)
\end{array}%
\end{equation*}}
Combining the last four equations and solving for $x_{2,i}^{\prime }(t)$
yields, for $t\in \left[ \tau _{k},\tau _{k+1}\right) $,

{\small \vspace{-3mm}
\begin{equation}
x_{2,i}^{\prime }(t)=x_{2,i}^{\prime }(\tau
_{k}^{+})e^{B_{1}(t)}+B_{2}\left( t,x_{1,i}^{\prime }(\tau
_{k}^{+}),A_{1}\left( t\right) \right) \text{, \ \ }i=1,2,3,5
\label{eq: x2_prime ON}
\end{equation}}
{\small \vspace{-3mm}
\begin{equation}
x_{2,4}^{\prime }(t)=x_{2,4}^{\prime }(\tau
_{k}^{+})e^{B_{1}(t)}+B_{3}\left( t,x_{1,4}^{\prime }(\tau
_{k}^{+}),B_{1}\left( t\right) \right)   \label{eq: x24_prime ON}
\end{equation}}
{\small \vspace{-3mm}
\begin{equation}
x_{2,6}^{\prime }(t)=x_{2,6}^{\prime }(\tau
_{k}^{+})e^{B_{1}(t)}+B_{4}\left( t,x_{1,6}^{\prime }(\tau
_{k}^{+}),B_{1}\left( t\right) \right)   \label{eq: x26_prime ON}
\end{equation}}
with%

{\small \vspace{-3mm}
\begin{align}
& B_{1}\left( t\right) \equiv \int_{\tau _{k}}^{t}\left[ \alpha _{2}\left(
1-d\frac{h^{ON}\left( t\right) }{x_{3,0}}\right) -\beta _{2}\right] dt
\label{eq: B(t)} \\
& B_{2}\left( \cdot \right) \equiv e^{B_{1}(t)}\int_{\tau
_{k}}^{t}G_{1}\left( t,\tau _{k}\right) e^{-B_{1}(t)}dt  \notag \\
& B_{3}\left( \cdot \right) \equiv e^{B_{1}(t)}\int_{\tau
_{k}}^{t}G_{2}\left( t,\tau _{k}\right) e^{-B_{1}(t)}dt \\
& B_{4}\left( \cdot \right) \equiv e^{B_{1}(t)}\int_{\tau
_{k}}^{t}G_{3}\left( t,\tau _{k}\right) e^{-B_{1}(t)}dt
\end{align}}
where {\small $G_{1}\left( t,\tau _{k}\right) =m_{1}\left( 1-\frac{%
h^{ON}\left( t\right) }{x_{3,0}}\right) x_{1,i}^{\prime }(\tau
_{k}^{+})e^{A_{1}\left( t\right) }$}, {\small $G_{2}\left( t,\tau
_{k}\right) =e^{-B_{1}(t)}x_{2}\left( t\right) \left( 1-d\frac{h^{ON}\left(
t\right) }{x_{3,0}}\right) $}$+e^{-B_{1}(t)}x_{1,4}^{\prime }(t)\cdot
m_{1}\left( 1-d\frac{h^{ON}\left( t\right) }{x_{3,0}}\right) $, {\small $%
G_{3}\left( t,\tau _{k}\right) =e^{-B_{1}(t)}\left[ x_{1,6}^{\prime
}(t)\cdot m_{1}\left( 1-d\frac{h^{ON}\left( t\right) }{x_{3,0}}\right)
-x_{2}\left( t\right) \right] $}, $t\in \left[ \tau _{k},\tau _{k+1}\right) $%
.

In particular, at $\tau _{k+1}^{-}$:

{\small
\begin{equation}
x_{2,i}^{\prime }(\tau _{k+1}^{-})=x_{2,i}^{\prime }(\tau
_{k}^{+})e^{B_{1}(\tau _{k})}+B_{2}\left( \tau _{k},x_{1,i}^{\prime }(\tau
_{k}^{+}),A\left( \tau _{k}\right) \right)   \label{eq: Deriv x2 t1}
\end{equation}}
{\small \vspace{-3mm}
\begin{equation}
x_{2,4}^{\prime }(\tau _{k+1}^{-})=x_{2,4}^{\prime }(\tau
_{k}^{+})e^{B_{1}(\tau _{k})}+B_{3}\left( t,x_{1,4}^{\prime }(\tau
_{k}^{+}),B_{1}\left( \tau _{k}\right) \right)   \label{eq: Deriv x24 t1}
\end{equation}}
{\small \vspace{-3mm}
\begin{equation}
x_{2,6}^{\prime }(\tau _{k+1}^{-})=x_{2,6}^{\prime }(\tau
_{k}^{+})e^{B_{1}(\tau _{k})}+B_{4}\left( t,x_{1,6}^{\prime }(\tau
_{k}^{+}),B_{1}\left( \tau _{k}\right) \right)   \label{eq: Deriv x26 t1}
\end{equation}}
where {\small $B_{1}\left( \tau _{k}\right) $}, {\small $B_{2}\left( \tau
_{k},x_{1,i}^{\prime }(\tau _{k}^{+}),A\left( \tau _{k}\right) \right) $}, {\small $%
B_{3}\left( t,x_{1,4}^{\prime }(\tau _{k}^{+}),B_{1}\left( \tau _{k}\right)
\right) $}, and {\small $B_{4}\left( t,x_{1,6}^{\prime }(\tau _{k}^{+}),B_{1}\left(
\tau _{k}\right) \right) $} are given from (\ref{eq: B(t)}).

Finally, for the \textquotedblleft clock\textquotedblright\ state variable,
from (\ref{eq: z1 dynamics})-(\ref{eq: z2 dynamics}) we have $\frac{\partial
f_{k}^{z_{i}}(t)}{\partial x_{n}}=\frac{\partial f_{k}^{z_{i}}(t)}{\partial
z_{j}}=\frac{\partial f_{k}^{z_{i}}(t)}{\partial \tilde{\theta}_{i}}=0$, $%
n,j=1,2$, $i=1,\ldots ,6$, so that $\frac{d}{dt}z_{j,i}^{\prime }(t)=0$, $%
j=1,2$, $i=1,\ldots ,6$, for $t\in \left[ \tau _{k},\tau _{k+1}\right) $.
Hence, $z_{j,i}^{\prime }(t)=z_{j,i}^{\prime }(\tau _{k}^{+})$, $j=1,2$, $%
i=1,\ldots ,6$, and $t\in \left[ \tau _{k},\tau _{k+1}\right) $.

\emph{2. The system is in state }$q^{OFF}$\emph{\ over interevent time
interval }$\left[ \tau _{k},\tau _{k+1}\right) $. Starting with $x_{1}\left(
t\right) $, based on (\ref{eq: x1 dynamics}) we once again have $\frac{%
\partial f_{k}^{x_{1}}(t)}{\partial x_{2}}=\frac{\partial f_{k}^{x_{1}}(t)}{%
\partial zj}=\frac{\partial f_{k}^{x_{1}}(t)}{\partial \tilde{\theta}_{i}}=0$%
, $j=1,2$, $i=1,2,4,6$, but now

{\small \vspace{-3mm}
\begin{eqnarray*}
&&%
\begin{array}{l}
\frac{\partial f_{k}^{x_{1}}(t)}{\partial x_{1}}=\alpha _{1}\left[ 1+\phi
_{\alpha }^{OFF}(t)\right] ^{-1}-\beta _{1}\left[ 1+\phi _{\beta }^{OFF}(t)%
\right] ^{-1} \\
\text{ \ \ \ \ \ \ \ \ }-m_{1}\left( 1-\frac{h^{OFF}\left( t\right) }{x_{3,0}%
}\right) -\lambda _{1}%
\end{array}
\\
&&%
\begin{array}{ll}
\frac{\partial f_{k}^{x_{1}}(t)}{\partial \tilde{\theta}_{3}} & =x_{1}\left[
1+\phi _{\alpha }^{ON}(t)\right] ^{-1}%
\end{array}
\\
&&%
\begin{array}{ll}
\frac{\partial f_{k}^{x_{1}}(t)}{\partial \tilde{\theta}_{5}} & =-x_{1}\left[
1+\phi _{\beta }^{ON}(t)\right] ^{-1}%
\end{array}%
\end{eqnarray*}}
Therefore, (\ref{(eq): General state derivative}) implies that, for $t\in %
\left[ \tau _{k},\tau _{k+1}\right) $:

{\small \vspace{-3mm}
\begin{equation}
x_{1,i}^{\prime }(t)=x_{1,i}^{\prime }(\tau _{k}^{+})e^{C_{1}\left( t\right)
}\text{, \ }i=1,2,4,6  \label{eq: x1_prime OFF}
\end{equation}}
{\small \vspace{-3mm}
\begin{equation}
x_{1,3}^{\prime }(t)=x_{1,3}^{\prime }(\tau _{k}^{+})e^{C_{1}\left( t\right)
}+C_{2}\left( t\right)   \label{eq: x13_prime OFF}
\end{equation}}
{\small \vspace{-3mm}
\begin{equation}
x_{1,5}^{\prime }(\tau _{k+1}^{-})=x_{1,5}^{\prime }(\tau
_{k}^{+})e^{C_{1}\left( \tau _{k}\right) }+C_{3}\left( t\right)
\label{eq: x15_prime OFF}
\end{equation}}
with

{\small \vspace{-3mm}
\begin{eqnarray}
&&%
\begin{array}{l}
C_{1}\left( t\right) \equiv \int_{\tau _{k}}^{t}\left[ \frac{\alpha _{1}}{%
1+\phi _{\alpha }^{OFF}(t)}-\frac{\beta _{1}}{1+\phi _{\beta }^{OFF}(t)}%
\right] dt \\
\text{ }-\int_{\tau _{k}}^{t}\frac{m_{1}}{x_{3,0}}h^{OFF}\left( t\right)
dt-\left( m_{1}+\lambda _{1}\right) \left( t-\tau _{k}\right)
\end{array}
\label{eq: C(t)} \\
&&C_{2}\left( t\right) \equiv e^{C_{1}\left( t\right) }\int_{\tau _{k}}^{t}
\left[ \frac{x_{1}\left( t\right) }{1+\phi _{\alpha }^{OFF}(t)}%
e^{-C_{1}\left( t\right) }\right] dt \\
&&C_{3}\left( t\right) \equiv e^{C_{1}\left( \tau _{k}\right) }\int_{\tau
_{k}}^{\tau _{k+1}}\left[ -\frac{x_{1}\left( t\right) }{1+\phi _{\beta
}^{OFF}(t)}e^{C_{1}\left( t\right) }\right] dt
\end{eqnarray}}
In particular, at $\tau _{k+1}^{-}$:

{\small
\begin{equation}
x_{1,i}^{\prime }(\tau _{k+1}^{-})=x_{1,i}^{\prime }(\tau
_{k}^{+})e^{C_{1}\left( \tau _{k}\right) }  \label{eq: Deriv x1 t3minus}
\end{equation}}
{\small \vspace{-3mm}
\begin{equation}
x_{1,3}^{\prime }(\tau _{k+1}^{-})=x_{1,3}^{\prime }(\tau
_{k}^{+})e^{C_{1}\left( \tau _{k}\right) }+C_{2}\left( \tau _{k}\right)
\label{eq: Deriv x13 t3minus}
\end{equation}}
{\small \vspace{-3mm}
\begin{equation}
x_{1,5}^{\prime }(\tau _{k+1}^{-})=x_{1,5}^{\prime }(\tau
_{k}^{+})e^{C_{1}\left( \tau _{k}\right) }+C_{3}\left( \tau _{k}\right)
\label{eq: Deriv x15 t3minus}
\end{equation}}
where {\small $C_{1}\left( \tau _{k}\right) $}, {\small $C_{2}\left( \tau _{k}\right) $}, and {\small $%
C_{3}\left( \tau _{k}\right) $} are given from (\ref{eq: C(t)}).

Similarly for $x_{2}(t)$, we have

{\small \vspace{-3mm}
\begin{equation*}
\begin{array}{l}
\frac{\partial f_{k}^{x_{2}}(t)}{\partial x_{1}}=m_{1}\left( 1-\frac{%
h^{OFF}\left( t\right) }{x_{3,0}}\right)  \\
\frac{\partial f_{k}^{x_{2}}(t)}{\partial x_{2}}=\alpha _{2}\left( 1-d\frac{%
h^{OFF}\left( t\right) }{x_{3,0}}\right) -\beta _{2} \\
\frac{\partial f_{k}^{x_{2}}(t)}{\partial \tilde{\theta}_{4}}=\left( 1-d%
\frac{h^{OFF}\left( t\right) }{x_{3,0}}\right) x_{2}\left( t\right)  \\
\frac{\partial f_{k}^{x_{2}}(t)}{\partial \tilde{\theta}_{6}}=-x_{2}\left(
t\right)
\end{array}%
\end{equation*}}
It is thus straightforward to verify that (\ref{(eq): General state
derivative}) yields, for $t\in \left[ \tau _{k},\tau _{k+1}\right) $,

{\small \vspace{-3mm}
\begin{equation}
x_{2,i}^{\prime }(t)=x_{2,i}^{\prime }(\tau
_{k}^{+})e^{D_{1}(t)}+D_{2}\left( t,x_{1,i}^{\prime }(\tau
_{k}^{+}),C_{1}\left( t\right) \right) \text{, \ \ }i=1,2,4,6
\label{eq: x2_prime OFF}
\end{equation}}
{\small \vspace{-3mm}
\begin{equation}
x_{2,4}^{\prime }(t)=x_{2,4}^{\prime }(\tau
_{k}^{+})e^{D_{1}(t)}+D_{3}\left( t,x_{1,4}^{\prime }(\tau
_{k}^{+}),D_{1}\left( t\right) \right)   \label{eq: x24_prime OFF}
\end{equation}}
{\small \vspace{-3mm}
\begin{equation}
x_{2,6}^{\prime }(t)=x_{2,6}^{\prime }(\tau
_{k}^{+})e^{D_{1}(t)}+D_{4}\left( t,x_{1,6}^{\prime }(\tau
_{k}^{+}),D_{1}\left( t\right) \right)   \label{eq: x26_prime OFF}
\end{equation}}
with

{\small \vspace{-3mm}
\begin{align}
& D_{1}\left( t\right) \equiv \int_{\tau _{k}}^{t}\left[ \alpha _{2}\left(
1-d\frac{h^{OFF}\left( t\right) }{x_{3,0}}\right) -\beta _{2}\right] dt
\label{eq: D(t)} \\
& D_{2}\left( \cdot \right) \equiv e^{D_{1}(t)}\int_{\tau
_{k}}^{t}G_{2}\left( t,\tau _{k}\right) e^{-D_{1}(t)}dt  \notag \\
& D_{3}\left( \cdot \right) \equiv e^{D_{1}(t)}\int_{\tau
_{k}}^{t}G_{3}\left( t,\tau _{k}\right) e^{-D_{1}(t)}dt \\
& D_{4}\left( \cdot \right) \equiv e^{D_{1}(t)}\int_{\tau
_{k}}^{t}G_{4}\left( t,\tau _{k}\right) e^{-D_{1}(t)}dt
\end{align}}
where {\small $G_{2}\left( t,\tau _{k}\right) =m_{1}\left( 1-\frac{h^{OFF}\left(
t\right) }{x_{3,0}}\right) x_{1,i}^{\prime }(\tau _{k}^{+})e^{C_{1}\left(
t\right) }$}, {\small $G_{3}\left( t,\tau _{k}\right) =x_{2}\left( t\right) \left( 1-d%
\frac{h^{OFF}\left( t\right) }{x_{3,0}}\right) +x_{1,4}^{\prime }(t)\cdot
m_{1}\left( 1-d\frac{h^{OFF}\left( t\right) }{x_{3,0}}\right) $}, {\small $%
G_{4}\left( t,\tau _{k}\right) =x_{1,6}^{\prime }(t)\cdot m_{1}\left( 1-d%
\frac{h^{OFF}\left( t\right) }{x_{3,0}}\right) -x_{2}\left( t\right) $}, $%
t\in \left[ \tau _{k},\tau _{k+1}\right) $.

In particular, at $\tau _{k+1}^{-}$:

{\small
\begin{equation}
x_{2,i}^{\prime }(\tau _{k+1}^{-})=x_{2,i}^{\prime }(\tau
_{k}^{+})e^{D_{1}(\tau _{k})}+D_{2}\left( \tau _{k},x_{1,i}^{\prime }(\tau
_{k}^{+}),C\left( \tau _{k}\right) \right)   \label{eq: Deriv x2 tkminus}
\end{equation}}
{\small \vspace{-3mm}
\begin{equation}
x_{2,4}^{\prime }(t)=x_{2,4}^{\prime }(\tau
_{k}^{+})e^{D_{1}(t)}+D_{3}\left( t,x_{1,4}^{\prime }(\tau
_{k}^{+}),D_{1}\left( \tau _{k}\right) \right)
\label{eq: Deriv x24 tkminus}
\end{equation}}
{\small \vspace{-3mm}
\begin{equation}
x_{2,6}^{\prime }(t)=x_{2,6}^{\prime }(\tau
_{k}^{+})e^{D_{1}(t)}+D_{4}\left( t,x_{1,6}^{\prime }(\tau
_{k}^{+}),D_{1}\left( \tau _{k}\right) \right)
\label{eq: Deriv x26 tkminus}
\end{equation}}
where {\small $D_{1}\left( \tau _{k}\right) $}, {\small $D_{2}\left( \cdot \right) $}, {\small $%
D_{3}\left( \cdot \right) $}, and {\small $D_{4}\left( \cdot \right) $} are given from
(\ref{eq: D(t)}).

Finally, for the \textquotedblleft clock\textquotedblright\ state variable,
based on (\ref{eq: z1 dynamics})-(\ref{eq: z2 dynamics}) we once again have $%
\frac{\partial f_{k}^{z_{i}}(t)}{\partial x_{n}}=\frac{\partial
f_{k}^{z_{i}}(t)}{\partial z_{j}}=\frac{\partial f_{k}^{z_{i}}(t)}{\partial
\tilde{\theta}_{i}}=0$, $n,j=1,2$, $i=1,\ldots ,6$, so that $\frac{d}{dt}%
z_{j,i}^{\prime }(t)=0$, $j=1,2$, $i=1,\ldots ,6$, for $t\in \left[ \tau
_{k},\tau _{k+1}\right) $. As a result, $z_{j,i}^{\prime
}(t)=z_{j,i}^{\prime }(\tau _{k}^{+})$, $j=1,2$, $i=1,\ldots ,6$, and $t\in %
\left[ \tau _{k},\tau _{k+1}\right) $.

\emph{3. A state transition from }$q^{ON}$\emph{\ to }$q^{OFF}$\emph{\
occurs at time }$\tau _{k}$. This necessarily implies that event $e_{1}$
took place at time $\tau _{k}$, i.e., $q(t)=q^{ON}$, $t\in \left[ \tau
_{k-1},\tau _{k}\right) $ and $q(t)=q^{OFF}$, $t\in \left[ \tau _{k},\tau
_{k+1}\right) $. From (\ref{(eq): Boundary condition}) we have, for $%
i=1,\ldots ,6$,

{\small \vspace{-3mm}
\begin{equation}
x_{1,i}^{\prime }(\tau _{k}^{+})=x_{1,i}^{\prime }(\tau _{k}^{-})+\left[
f_{k}^{x_{1}}(\tau _{k}^{-})-f_{k+1}^{x_{1}}(\tau _{k}^{+})\right] \cdot
\tau _{k,i}^{\prime }  \label{eq: x1_prime ON/OFF}
\end{equation}}
and

{\small \vspace{-3mm}
\begin{equation}
x_{2,i}^{\prime }(\tau _{k}^{+})=x_{2,i}^{\prime }(\tau _{k}^{-})+\left[
f_{k}^{x_{2}}(\tau _{k}^{-})-f_{k+1}^{x_{2}}(\tau _{k}^{+})\right] \cdot
\tau _{k,i}^{\prime }  \label{eq: x2_prime ON/OFF}
\end{equation}}
where $f_{k}^{x_{1}}(\tau _{k}^{-})-f_{k+1}^{x_{1}}(\tau _{k}^{+})$ and $%
f_{k}^{x_{2}}(\tau _{k}^{-})-f_{k+1}^{x_{2}}(\tau _{k}^{+})$ ultimately
depend on $h^{ON}\left( \tau _{k}^{-}\right) $ and $h^{OFF}\left( \tau
_{k}^{+}\right) $. Evaluating $h^{ON}\left( \tau _{k}^{-}\right) $ from (\ref%
{eq: h_ON}) over the appropriate time interval results in

{\small \vspace{-3mm}
\begin{equation*}
\begin{array}{ll}
h^{ON}\left( \tau _{k}^{-}\right)  & =x_{3}(\tau _{k-1}^{+})e^{-\left( \tau
_{k}-\tau _{k-1}\right) /\sigma } \\
& +\mu _{3}\sigma \lbrack 1-e^{-\left( \tau _{k}-\tau _{k-1}\right) /\sigma
}]+\tilde{\zeta}_{3}(\tau _{k})%
\end{array}%
\end{equation*}}
and it follows directly from (\ref{eq: h_OFF}) that $h^{OFF}\left( \tau
_{k}^{+}\right) =x_{3}(\tau _{k}^{+})$. Moreover, by continuity of $x_{n}(t)$
(due to conservation of mass), $x_{n}(\tau _{k}^{+})=x_{n}(\tau _{k}^{-})$, $%
n=1,2$. Also, since we have assumed that $\left\{ \zeta _{i}(t)\right\} $, $%
i=1,\ldots ,3$, is piecewise continuous w.p.1 and that no two events can
occur at the same time w.p.1, $\zeta _{i}(\tau _{k}^{-})=$ $\zeta _{i}(\tau
_{k}^{+})$, $i=1,\ldots ,3$. Hence, for $x_{1}(t)$, evaluating $\Delta
_{f}^{1}\left( \tau _{k}\right) \equiv f_{k}^{x_{1}}(\tau
_{k}^{-})-f_{k+1}^{x_{1}}(\tau _{k}^{+})$ yields

{\small \vspace{-3mm}
\begin{equation}
\begin{array}{l}
\Delta _{f}^{1}\left( \tau _{k},\zeta _{3}\left( \tau _{k}\right) \right)
=\left\{ \alpha _{1}\left[ 1+\phi _{\alpha }^{ON}(\tau _{k}^{-})\right]
^{-1}\right.  \\
\text{ }-\alpha _{1}\left[ 1+\phi _{\alpha }^{OFF}(\tau _{k}^{+})\right]
^{-1}-\beta _{1}\left[ 1+\phi _{\beta }^{ON}(\tau _{k}^{-})\right] ^{-1} \\
\text{ }+\beta _{1}\left[ 1+\phi _{\beta }^{OFF}(\tau _{k}^{+})\right] ^{-1}
\\
\text{ }\left. +\frac{m_{1}}{x_{3,0}}\left[ h^{ON}\left( \tau
_{k}^{-}\right) -x_{3}(\tau _{k})\right] \right\} \cdot x_{1}(\tau _{k})%
\end{array}
\label{eq: delta f1 on/off}
\end{equation}}
Finally, the term $\tau _{k,i}^{\prime }$, which corresponds to the event
time derivative with respect to $\tilde{\theta}_{i}$ at event time $\tau _{k}
$, is determined using (\ref{(eq): Endogenous event}), as detailed in (\ref%
{eq: Lemma 1}) later.

A similar analysis applies to $x_{2}(t)$, so that $f_{k}^{x_{2}}(\tau
_{k}^{-})$ and $f_{k+1}^{x_{2}}(\tau _{k}^{+})$ ultimately depend on $%
h^{ON}\left( \tau _{k}^{-}\right) $ and $h^{OFF}\left( \tau _{k}^{+}\right) $%
, respectively. Hence, evaluating $\Delta _{f}^{2}\left( \tau _{k}\right)
\equiv f_{k}^{x_{2}}(\tau _{k}^{-})-f_{k+1}^{x_{2}}(\tau _{k}^{+})$ from (%
\ref{eq: x2 dynamics}) yields

{\small \vspace{-3mm}
\begin{equation}
\begin{array}{ll}
\Delta _{f}^{2}\left( \tau _{k},\zeta _{3}\left( \tau _{k}\right) \right)  &
=\frac{\alpha _{2}d}{x_{3,0}}\left[ x_{3}(\tau _{k})-h^{ON}\left( \tau
_{k}^{-}\right) \right] \cdot x_{2}(\tau _{k}) \\
& -\frac{m_{1}}{x_{3,0}}\left[ h^{ON}\left( \tau _{k}^{-}\right) -x_{3}(\tau
_{k})\right] \cdot x_{1}(\tau _{k})%
\end{array}
\label{eq: deltaF2 ON/OFF}
\end{equation}}

In the case of the \textquotedblleft clock\textquotedblright\ state
variable, $z_{1}(t)$ is discontinuous in $t$ at $t=\tau _{k}$, while $%
z_{2}(t)$ is continuous. Hence, based on (\ref{eq: Reset fn.}) and (\ref{eq:
z1 dynamics}), we have that $z_{1,i}^{\prime }(\tau _{k}^{+})=0$. From (\ref%
{(eq): Boundary condition}) and (\ref{eq: z2 dynamics}), it is
straightforward to verify that $z_{2,i}^{\prime }(\tau
_{k}^{+})=z_{2,i}^{\prime }(\tau _{k}^{-})-\tau _{k,i}^{\prime }$, $%
i=1,\ldots ,6$.

\emph{4. A state transition from }$q^{OFF}$\emph{\ to }$q^{ON}$\emph{\
occurs at time }$\tau _{k}$. This necessarily implies that event $e_{2}$
took place at time $\tau _{k}$, i.e., $q(t)=q^{OFF}$, $t\in \left[ \tau
_{k-1},\tau _{k}\right) $ and $q(t)=q^{ON}$, $t\in \left[ \tau _{k},\tau
_{k+1}\right) $. The same reasoning as above holds, so that (\ref{eq:
x1_prime ON/OFF})-(\ref{eq: x2_prime ON/OFF}) also apply. For $x_{1}(t)$, $%
f_{k}^{x_{1}}(\tau _{k}^{-})-f_{k+1}^{x_{1}}(\tau _{k}^{+})$ can be
evaluated from (\ref{eq: x1 dynamics}) and ultimately depends on $%
h^{OFF}\left( \tau _{k}^{-}\right) $ and $h^{ON}\left( \tau _{k}^{+}\right) $%
. Evaluating $h^{OFF}\left( \tau _{k}^{-}\right) $ from (\ref{eq: h_OFF})
over the appropriate time interval results in

{\small \vspace{-3mm}
\begin{equation*}
\begin{array}{ll}
h^{OFF}\left( \tau _{k}^{-}\right)  & =x_{3}(\tau _{k-1}^{+})e^{-\left( \tau
_{k}-\tau _{k-1}\right) /\sigma } \\
& +(\mu _{3}\sigma +x_{3,0})[1-e^{-\left( \tau _{k}-\tau _{k-1}\right)
/\sigma }]+\tilde{\zeta}_{3}(\tau _{k})%
\end{array}%
\end{equation*}}
and it follows directly from (\ref{eq: h_ON}) that $h^{ON}\left( \tau
_{k}^{+}\right) =x_{3}(\tau _{k}^{+})$.

As in the previous case, continuity due to conservation of mass applies, so
that evaluating $\Delta _{f}^{1}(\tau _{k})\equiv f_{k}^{x_{1}}(\tau
_{k}^{-})-f_{k+1}^{x_{1}}(\tau _{k}^{+})$ yields

{\small \vspace{-3mm}
\begin{equation}
\begin{array}{l}
\Delta _{f}^{1}(\tau _{k},\zeta _{3}\left( \tau _{k}\right) )=\left\{ \alpha
_{1}\left[ 1+\phi _{\alpha }^{OFF}(\tau _{k}^{-})\right] ^{-1}\right.  \\
\text{ }-\alpha _{1}\left[ 1+\phi _{\alpha }^{ON}(\tau _{k}^{+})\right]
^{-1}-\beta _{1}\left[ 1+\phi _{\beta }^{OFF}(\tau _{k}^{-})\right] ^{-1} \\
\text{ }+\beta _{1}\left[ 1+\phi _{\beta }^{ON}(\tau _{k}^{+})\right] ^{-1}
\\
\text{ }\left. +\frac{m_{1}}{x_{3,0}}\left[ h^{OFF}\left( \tau
_{k}^{-}\right) -x_{3}(\tau _{k})\right] \right\} \cdot x_{1}(\tau _{k})%
\end{array}
\label{eq: deltaF1 OFF/ON}
\end{equation}}

Similarly for $x_{2}(t)$, by evaluating $\Delta _{f}^{2}(\tau _{k})\equiv
f_{k}^{x_{2}}(\tau _{k}^{-})-f_{k+1}^{x_{2}}(\tau _{k}^{+})$ from (\ref{eq:
x2 dynamics}), and making the appropriate simplifications due to continuity,
we obtain

{\small \vspace{-3mm}
\begin{equation}
\begin{array}{ll}
\Delta _{f}^{2}(\tau _{k},\zeta _{3}\left( \tau _{k}\right) ) & =\frac{%
\alpha _{2}d}{x_{3,0}}\left[ x_{3}(\tau _{k})-h^{OFF}\left( \tau
_{k}^{-}\right) \right] \cdot x_{2}(\tau _{k}) \\
& -\frac{m_{1}}{x_{3,0}}\left[ h^{OFF}\left( \tau _{k}^{-}\right)
-x_{3}(\tau _{k})\right] \cdot x_{1}(\tau _{k})%
\end{array}
\label{eq: deltaF2 OFF/ON}
\end{equation}}

In the case of the \textquotedblleft clock\textquotedblright\ state
variable, $z_{1}(t)$ is continuous in $t$ at $t=\tau _{k}$, while $z_{2}(t)$
is discontinuous. As a result, based on (\ref{(eq): Boundary condition}) and
(\ref{eq: z1 dynamics}), we have that $z_{1,i}^{\prime }(\tau
_{k}^{+})=z_{1,i}^{\prime }(\tau _{k}^{-})-\tau _{k,i}^{\prime }$. From (\ref%
{eq: Reset fn.}) and (\ref{eq: z2 dynamics}), it is simple to verify that $%
z_{2,i}^{\prime }(\tau _{k}^{+})=0$, $i=1,\ldots ,6$.

Note that, since $z_{j,i}^{\prime }(t)=z_{j,i}^{\prime }(\tau _{k}^{+})$, $%
t\in \left[ \tau _{k},\tau _{k+1}\right) $, we will have that $%
z_{j,i}^{\prime }(\tau _{k}^{-})=z_{j,i}^{\prime }(\tau _{k-1}^{+})$, $j=1,2$%
, $i=1,\ldots ,6$. Moreover, the sample path of our SHA consists of a
sequence of alternating $e_{1}$ and $e_{2}$ events, which implies that $%
z_{1,i}^{\prime }(\tau _{k}^{-})=0$ if event $e_{1}$ occurred at $\tau
_{k-1} $, while $z_{2,i}^{\prime }(\tau _{k}^{-})=0$ if event $e_{2}$
occurred at $\tau _{k-1}$. Then, adopting the notation $p,\overline{p}%
=\left\{ 1,2\right\} $ such that $p+\overline{p}=3$, we have:

{\small
\begin{equation}
z_{p,i}^{\prime }(\tau _{k}^{+})=\left\{
\begin{array}{ll}
-\tau _{k,i}^{\prime } & \text{if event }e_{\overline{p}}\text{ occurs at }%
\tau _{k} \\
0 & \text{otherwise}%
\end{array}%
\right.  \label{eq: State deriv. z1}
\end{equation}}

We now proceed with a general result which applies to all events defined for
our SHA\ model. We denote the time of occurrence of the $j$th state
transition by $\tau _{j}$, define its derivative with respect to the control
parameters as $\tau _{j,i}^{\prime }\equiv \frac{\partial \tau _{j}}{%
\partial \tilde{\theta}_{i}}$, $i=1,\ldots ,6$, and also define $%
f_{j}^{x_{n}}\left( \tau _{j}\right) \equiv \dot{x}_{n}(\tau _{j})$, $n=1,2$.

\begin{lemma}
When an event $e_{p}$, $p=1,2$, occurs, the derivative $\tau _{j,i}^{\prime }
$, $i=1,\ldots ,6$, of state transition times $\tau _{j}$, $j=1,2,\ldots $
with respect to the control parameters $\tilde{\theta}_{i}$, $i=1,\ldots ,6$%
, satisfies:%
\begin{equation}
\tau _{j,i}^{\prime }=\left\{
\begin{array}{ll}
\frac{\mathbf{1}-x_{1,i}^{\prime }(\tau _{j}^{-})-x_{2,i}^{\prime }(\tau
_{j}^{-})}{f_{j-1}^{x_{1}}(\tau _{j}^{-})+f_{j-1}^{x_{2}}(\tau _{j}^{-})} &
\begin{array}{l}
\text{if }e_{1}\text{ occurs and }i=1 \\
\text{or }e_{2}\text{ occurs and }i=2%
\end{array}
\\
&  \\
\frac{-x_{1,i}^{\prime }(\tau _{j}^{-})-x_{2,i}^{\prime }(\tau _{j}^{-})}{%
f_{j-1}^{x_{1}}(\tau _{j}^{-})+f_{j-1}^{x_{2}}(\tau _{j}^{-})} &
\begin{array}{l}
\text{if }e_{1}\text{ occurs and }i \neq 1 \\
\text{or }e_{2}\text{ occurs and }i \neq 2%
\end{array}%
\end{array}%
\right.   \label{eq: Lemma 1}
\end{equation}
\end{lemma}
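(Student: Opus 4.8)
The plan is to recognize that each transition event $e_p$, $p=1,2$, is an \emph{endogenous} event in the sense of case (ii) of the IPA overview, and then to specialize the general event-time derivative formula (\ref{(eq): Endogenous event}). Both events in the set $E=\{e_1,e_2\}$ are triggered precisely when the total cancer cell population crosses a PSA threshold, so the natural switching function is
\begin{equation*}
g_p\left(x(\tilde{\theta},t),\tilde{\theta}\right)=x_1(\tilde{\theta},t)+x_2(\tilde{\theta},t)-\tilde{\theta}_p,\qquad p=1,2,
\end{equation*}
with $\tau_j=\min\{t>\tau_{j-1}:g_p=0\}$, matching the definitions of $e_1$ and $e_2$. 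The crucial structural observation is that $g_p$ depends on the full state only through $x_1$ and $x_2$, and on the parameter vector only through the single component $\tilde{\theta}_p$; this is what collapses the general formula into the two-branch expression (\ref{eq: Lemma 1}).

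First I would assemble the two ingredients entering (\ref{(eq): Endogenous event}). Differentiating $g_p$ in the state gives $\partial g_p/\partial x_1=\partial g_p/\partial x_2=1$ and $\partial g_p/\partial x_3=\partial g_p/\partial z_1=\partial g_p/\partial z_2=0$, so the inner product $\frac{\partial g_p}{\partial x}\cdot f_{j-1}(\tau_j^-)$ reduces to $f_{j-1}^{x_1}(\tau_j^-)+f_{j-1}^{x_2}(\tau_j^-)$, which is exactly the denominator claimed in the lemma. By the same token $\frac{\partial g_p}{\partial x}\cdot x'(\tau_j^-)=x_{1,i}'(\tau_j^-)+x_{2,i}'(\tau_j^-)$, so the nonzero derivatives $x_{3,i}'$, $z_{j,i}'$ never enter. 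The only parameter-dependent term is $\partial g_p/\partial\tilde{\theta}_i$, which equals $-1$ when $i=p$ and $0$ otherwise; this is precisely the mechanism that distinguishes the two branches.

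Substituting into (\ref{(eq): Endogenous event}) and carrying the leading minus sign through yields
\begin{equation*}
\tau_{j,i}'=-\frac{\frac{\partial g_p}{\partial\tilde{\theta}_i}+x_{1,i}'(\tau_j^-)+x_{2,i}'(\tau_j^-)}{f_{j-1}^{x_1}(\tau_j^-)+f_{j-1}^{x_2}(\tau_j^-)}.
\end{equation*}
The case $i=p$ (that is, $e_1$ with $i=1$, or $e_2$ with $i=2$) contributes $\partial g_p/\partial\tilde{\theta}_i=-1$, and the double sign change produces the $\mathbf{1}$ in the numerator of the first branch; the case $i\neq p$ has $\partial g_p/\partial\tilde{\theta}_i=0$ and gives the second branch directly. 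I would close by verifying that the nondegeneracy condition $\frac{\partial g_p}{\partial x}\cdot f_{j-1}(\tau_j^-)\neq 0$ demanded by (\ref{(eq): Endogenous event}) reduces here to $f_{j-1}^{x_1}(\tau_j^-)+f_{j-1}^{x_2}(\tau_j^-)\neq 0$, i.e. the total population is not momentarily stationary at the crossing, which holds generically along a sample path.

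I do not anticipate a serious obstacle, since the result is a direct specialization of the general endogenous-event derivative. The only points demanding care are bookkeeping ones: confirming that $g_p$ is independent of $x_3$ and of both clock variables, so those state-derivative components drop out of the inner products, and correctly tracking the interaction of the sign from $\partial g_p/\partial\tilde{\theta}_p=-1$ with the overall minus sign in (\ref{(eq): Endogenous event}). A point worth emphasizing is that the \emph{same} switching function $g_p$ governs both $e_1$ and $e_2$, with only the index $p$ changing, which is exactly why a single formula with the stated case split on $i$ versus $p$ suffices.
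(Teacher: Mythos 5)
Your proposal is correct and follows exactly the route the paper intends: the paper omits the proof (deferring to \cite{FleckADHS2015}) but explicitly states that $\tau_{k,i}^{\prime}$ is determined from the endogenous-event formula (\ref{(eq): Endogenous event}), which is precisely your specialization with guard function $g_p = x_1 + x_2 - \tilde{\theta}_p$. Your sign bookkeeping ($\partial g_p/\partial\tilde{\theta}_i = -1$ when $i=p$, zero otherwise, combined with the leading minus sign) correctly reproduces both branches of (\ref{eq: Lemma 1}).
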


\begin{proof} The proof is omitted here, but can be found in \cite{FleckADHS2015}.
\end{proof}

We note that the numerator in (\ref{eq: Lemma 1}) is determined using (\ref%
{eq: Deriv x1 t1}) and (\ref{eq: Deriv x2 t1}) if $q(\tau _{j}^{-})=q^{ON}$,
or (\ref{eq: Deriv x1 t3minus}) and (\ref{eq: Deriv x2 tkminus}) if $q(\tau
_{j}^{-})=q^{OFF}$. Moreover, the denominator in (\ref{eq: Lemma 1}) is
computed using (\ref{eq: x1 dynamics})-(\ref{eq: x2 dynamics}) and it is
simple to verify that, if event $e_{1}$ takes place at time $\tau _{j}$,

{\small \vspace{-3mm}
\begin{equation}
\begin{array}{l}
f_{j-1}^{x_{1}}(\tau _{j}^{-})+f_{j-1}^{x_{2}}(\tau _{j}^{-})=\alpha _{1}
\left[ 1+\phi _{\alpha }^{ON}(\tau _{j}^{-})\right] ^{-1}\cdot x_{1}(\tau
_{j}) \\
\text{ \ }-\left\{ \beta _{1}\left[ 1+\phi _{\beta }^{ON}(\tau _{j}^{-})%
\right] ^{-1}+\lambda _{1}\right\} \cdot x_{1}(\tau _{j})+\mu _{1} \\
\text{ \ }+\left[ \alpha _{2}\left( 1-d\frac{h^{ON}\left( \tau
_{j}^{-}\right) }{x_{3,0}}\right) -\beta _{2}\right] \cdot x_{2}(\tau _{j})
\\
\text{ \ \ }+\zeta _{1}(\tau _{j})+\zeta _{2}(\tau _{j})%
\end{array}
\label{eq: event e1}
\end{equation}}
and, if event $e_{2}$ takes place at time $\tau _{j}$,

{\small \vspace{-3mm}
\begin{equation}
\begin{array}{l}
f_{j-1}^{x_{1}}(\tau _{j}^{-})+f_{j-1}^{x_{2}}(\tau _{j}^{-})=\alpha _{1}
\left[ 1+\phi _{\alpha }^{OFF}(\tau _{j}^{-})\right] ^{-1}\cdot x_{1}(\tau
_{j}) \\
\text{ \ \ }-\left\{ \beta _{1}\left[ 1+\phi _{\beta }^{OFF}(\tau _{j}^{-})%
\right] ^{-1}+\lambda _{1}\right\} \cdot x_{1}(\tau _{j})+\mu _{1} \\
\text{ \ }+\left[ \alpha _{2}\left( 1-d\frac{h^{OFF}\left( \tau
_{j}^{-}\right) }{x_{3,0}}\right) -\beta _{2}\right] \cdot x_{2}(\tau _{j})
\\
\text{ \ \ }+\zeta _{1}(\tau _{j})+\zeta _{2}(\tau _{j})%
\end{array}
\label{eq: event e2}
\end{equation}}
\vspace{-3mm}

\subsection{Cost Derivative}

Let us denote the total number of on and off-treatment periods (complete or
incomplete) in $\left[ 0,T\right] $ by $K_{T}$. Also let $\xi _{k}$ denote
the start of the $k^{th}$ period and $\eta _{k}$ denote the end of the $%
k^{th}$ period (of either type). Finally, let $M_{T}=\lfloor \frac{K_{T}}{2}%
\rfloor $ be the total number of complete on-treatment periods, and $\Delta
_{m}^{ON}$ denote the duration of the $m^{th}$ complete on-treatment period,
where clearly

\begin{equation*}
\Delta _{m}^{ON}\equiv \eta _{m}-\xi _{m}\text{, }m=1,2,\ldots
\end{equation*}
\vspace{-3mm}

It was shown in \cite{FleckADHS2015} that the derivative of the sample function $L(\tilde{\theta} )$ with respect to the
control parameters satisfies:

\begin{equation}
\begin{array}{l}
\frac{dL(\tilde{\theta} )}{d\tilde{\theta} _{i}}=\frac{W}{T}\overset{K_{T}}{\underset{k=1}{%
\sum }}\int_{\xi _{k}}^{\eta _{k}}\left[ \frac{x_{1,i}^{\prime }(\tilde{\theta}
,t)+x_{2,i}^{\prime }(\tilde{\theta} ,t)}{PSA_{init}}\right] dt \\
\text{ \ \ }+\frac{\left( 1-W\right) }{T}\overset{M_{T}}{\underset{m=1}{\sum
}}\frac{\Delta _{m}^{ON}}{T}\cdot \left( \eta _{m,i}^{\prime }-\xi
_{m,i}^{\prime }\right) \\
\text{ \ \ }-\frac{\left( 1-W\right) }{T}\mathbf{1}\left[ K_{T}\text{ is odd}%
\right] \cdot \xi _{M_{T}+1,i}^{\prime }\cdot \left( \frac{T-\xi _{M_{T}+1}}{%
T}\right)%
\end{array}
\label{eq: Theorem 1}
\end{equation}
where $\mathbf{1}\left[ \cdot \right] $ is the usual indicator function and $%
PSA_{init}$ is the value of the patient's PSA level at the start of the
first on-treatment cycle.

The derivation of (\ref{eq: Theorem 1}) is omitted here, but can be found in \cite{FleckADHS2015}. We now proceed to present the results obtained from our IPA-driven
sensitivity analysis.

\section{Results}
\label{Results}

The results shown here represent an initial study of sensitivity analysis
applied to a SHA\ model of prostate cancer progression in which we consider
only noise and fluctuations associated with cell population dynamics, and do
not account for noise in the patient's androgen level. Representing
randomness as Gaussian white noise, the authors in \cite{Tanaka2010}
verified that variable time courses of the PSA\ levels were produced without
losing the tendency of the deterministic system, thus yielding simulation
results that were comparable to the statistics of clinical data. For this
reason, in this work we take $\left\{ \zeta _{i}\left( t\right) \right\} $, $%
i=1,2$, to be Gaussian white noise with zero mean and standard deviation of
0.001, similarly to \cite{Tanaka2010}, although we remind the reader that
our methodology applies independently of the distribution chosen to
represent $\left\{ \zeta _{i}\left( t\right) \right\} $, $i=1,2$. We
estimate the noise associated with cell population dynamics at event times
by randomly sampling from a uniform distribution with zero mean and standard
deviation of 0.001. Simulations of the prostate cancer model as a pure DES
are thus run to generate sample path data to which the IPA estimator is
applied. In all results reported here, we measure the sample path length in
terms of the number of days elapsed since the onset of IAS therapy, which we
choose to be $T=2500$ days.

Three sets of simulations were performed: in the first one we consider the optimal therapy configuration determined for Patient \#15 in \cite{FleckNAHS2016} and vary the values of $\tilde{\theta}_{i}$, $i=3,\ldots,6 $ (one at a time). For the second, we use PSA threshold values that yield a therapy of maximum cost and once again vary the values of $\tilde{\theta}_{i}$, $i=3,\ldots,6 $ (one at a time). Finally, in our third set of simulations, we let $\tilde{\theta}_{i}$, $i=3,\ldots,6 $, take the nominal values from \cite{Liu2015} and vary the values of $\tilde{\theta}_{1}$ and $\tilde{\theta}_{2}$ along their allowable ranges.

Table \ref{tb: Results opt} presents the sensitivity of the model
parameters, $\frac{dL}{d\tilde{\theta}_{i}}$, $i=3,\ldots ,6$, around the
optimal configuration $\left[ \tilde{\theta}_{1}^{\ast },\tilde{\theta}%
_{2}^{\ast }\right] =\left[ 1.5,8.0\right] $ for the values of $\tilde{\theta%
}_{i}$, $i=3,\ldots ,6$, fitted to the model of Patient \#15 in \cite%
{Liu2015}. We note that the results shown here are representative of the phenomena that may be uncovered by this type of analysis, and were hence generated using the model of a single patient. Moreover, while the use of different patient models may potentially reveal additional phenomena, the insights presented below are interesting in their own right and thus set the stage for extending this analysis to other patients.

\vspace{-3mm}
%TCIMACRO{\TeXButton{B}{\begin{table}[htb] \centering}}%
%BeginExpansion
\begin{table}[htb] \centering%
%EndExpansion
\caption{Sensitivity of model parameters around the optimal therapy
configuration}\label{tb: Results opt}%
\begin{tabular}{|c|c|c|c|}
\hline
$\frac{dL}{d\tilde{\theta}_{3}}$ & $\frac{dL}{d\tilde{\theta}_{4}}$ & $\frac{%
dL}{d\tilde{\theta}_{5}}$ & $\frac{dL}{d\tilde{\theta}_{6}}$ \\ \hline
$5.44$ & $-0.25$ & $-5.95$ & $0.28$ \\ \hline
\end{tabular}%
%TCIMACRO{\TeXButton{E}{\end{table}}}%
%BeginExpansion
\end{table}%
%EndExpansion
\vspace{-3mm}

Recall that $\tilde{\theta}_{3}$ and $\tilde{\theta}_{4}$ correspond to the
HSC proliferation constant and CRC proliferation constant, respectively,
while $\tilde{\theta}_{5}$ and $\tilde{\theta}_{6}$ are the HSC apoptosis
constant and CRC apoptosis constant, respectively. Several interesting
remarks can be made based on the above results; in what follows, we adopt
the notation $x\approx y$ to indicate that $x$ takes values \emph{%
approximately equal} to $y$.

From Table \ref{tb: Results opt}, it can be seen that $\frac{dL}{d\tilde{%
\theta}_{3}}\approx -\frac{dL}{d\tilde{\theta}_{5}}$ and $\frac{dL}{d\tilde{%
\theta}_{4}}\approx -\frac{dL}{d\tilde{\theta}_{6}}$, which indicates that
the sensitivities of proliferation and apoptosis constants are of the same
order of magnitude (in absolute value) for any given cancer cell
subpopulation. It is also possible to verify a large difference in the
values of the sensitivities across different subpopulations; in fact the
sensitivities of HSC proliferation and apoptosis constants are approximately
21 times higher than those of CRC constants. In other words, the system is
more sensitive to changes in the HSC constants than changes in the CRC
constants, i.e., $\tilde{\theta}_{3}$ and $\tilde{\theta}_{5}$ are more
critical model parameters than $\tilde{\theta}_{4}$ and $\tilde{\theta}_{6}$%
. Additionally, $\frac{dL}{d\tilde{\theta}_{3}}>0$ and $\frac{dL}{d\tilde{%
\theta}_{6}}>0$, while $\frac{dL}{d\tilde{\theta}_{4}}<0$ and $\frac{dL}{d%
\tilde{\theta}_{5}}>0$. A possible explanation for this has to do with the
fact that HSCs are the dominant subpopulation in a prostate tumor under IAS
therapy, which means that the size of this subpopulation has a greater
impact on the overall size of the tumor, and consequently, on the value of
the PSA level. As a result, increasing  $\tilde{\theta}_{3}$ (or decreasing $%
\tilde{\theta}_{5}$) leads to an increase in the size of the HSC population,
reflected in the PSA level, thus increasing the overall cost. On the other
hand, increasing $\tilde{\theta}_{4}$ (or decreasing $\tilde{\theta}_{6}$)
directly increases the size of the CRC population; however, since the
conditions under which CRCs thrive are those under which HSCs perish, an
increase in the size of the CRC population implies that the size of the HSC
population will decrease. Given that HSCs are the dominant subpopulation,
the PSA level would ultimately decrease, thus decreasing the overall cost.

The effect of changes in $\tilde{\theta}_{i}$, $i=3,\ldots ,6$, on the
sensitivity of model parameters was analyzed next. As the values of $\tilde{%
\theta}_{i}$, $i=3,\ldots ,6$, were progressively altered, two scenarios
emerged: $Scenario$ $A$ - a set of model parameter values was found for which the
evolution of the prostate tumor is permanently halted after one or two
cycles of treatment, i.e., the simulated IAS therapy scheme is curative; $Scenario$ $B$ - a set of model parameter values was found for which the prostate tumor
grows in an uncontrollable manner, i.e., the simulated IAS therapy scheme is
ineffective. $Scenario$ $A$ occurred when $\tilde{\theta}_{3}$ took on
values that were at least 15\% smaller than the nominal value given in \cite%
{Liu2015}, or when $\tilde{\theta}_{5}$ took on values that were at least
30\% smaller than the nominal value given in \cite{Liu2015}; no variations
in either $\tilde{\theta}_{4}$ or $\tilde{\theta}_{6}$ lead to such
scenario. On the other hand, $Scenario$ $B$ occurred when $\tilde{\theta}%
_{3}$ took on values that were at least 15\% higher than the nominal value
given in \cite{Liu2015}, or when $\tilde{\theta}_{4}$ took on values that
were at least 10\% higher than the nominal value given in \cite{Liu2015}, or
when $\tilde{\theta}_{5}$ took on values that were at least 30\% higher than
the nominal value given in \cite{Liu2015}, or when $\tilde{\theta}_{6}$ took
on values that were at least 10\% smaller than the nominal value given in
\cite{Liu2015}.

In practical terms, the above results indicate that if the optimal IAS\
therapy (designed using the model of Patient \#15) were applied to a new patient
whose HSC population dynamics are slower than those of Patient \#15 (i.e.,
the new patient's HSC proliferation constant is at least 15\% smaller than
that of Patient \#15; or the new patient's HSC apoptosis constant is at
least 30\% smaller than that of Patient \#15), then the size of the new
patient's tumor would remain stable and under control after at most two
treatment cycles. On the other hand, if the optimal IAS\ therapy (designed
using the model of Patient \#15) were applied to a new patient whose HSC
population dynamics are faster than those of Patient \#15, then the size of
the new patient's tumor would grow uncontrollably.

In our second set of simulations, we let $\tilde{\theta}_{1}$ and $\tilde{%
\theta}_{2}$ take suboptimal values and once again vary the values of $%
\tilde{\theta}_{i}$, $i=3,\ldots ,6$ (one at a time). Table \ref{tb: Results
subopt} presents the sensitivity of the model parameters, $\frac{dL}{d\tilde{%
\theta}_{i}}$, $i=3,\ldots ,6$, around the suboptimal configuration $\left[
\tilde{\theta}_{1},\tilde{\theta}_{2}\right] =\left[ 7.5,15.0\right] $ for
the values of $\tilde{\theta}_{1}$, $i=3,\ldots ,6$, fitted to the model of
Patient \#15 in \cite{Liu2015}.

\vspace{-3mm}
%TCIMACRO{\TeXButton{B}{\begin{table}[htb] \centering}}%
%BeginExpansion
\begin{table}[htb] \centering%
%EndExpansion
\caption{Sensitivity of model parameters around a suboptimal therapy
configuration}\label{tb: Results subopt}%
\begin{tabular}{|c|c|c|c|}
\hline
$\frac{dL}{d\tilde{\theta}_{3}}$ & $\frac{dL}{d\tilde{\theta}_{4}}$ & $\frac{%
dL}{d\tilde{\theta}_{5}}$ & $\frac{dL}{d\tilde{\theta}_{6}}$ \\ \hline
$17.78$ & $0.014$ & $-17.15$ & $-0.016$ \\ \hline
\end{tabular}%
%TCIMACRO{\TeXButton{E}{\end{table}}}%
%BeginExpansion
\end{table}%
%EndExpansion
\vspace{-3mm}

Once again, the effect of changes in $\tilde{\theta}_{i}$, $i=3,\ldots ,6$,
on the sensitivity of model parameters was analyzed. $Scenario$ $A$ occurred
when $\tilde{\theta}_{3}$ took on values that were at least 10\% smaller
than the nominal value given in \cite{Liu2015}, or when $\tilde{\theta}_{5}$
took on values that were at least 20\% larger than the nominal value given
in \cite{Liu2015}; no variations in either $\tilde{\theta}_{4}$ or $\tilde{%
\theta}_{6}$ lead to such scenario. Moreover, $Scenario$ $B$ did not emerge
in any of the simulations performed under this suboptimal configuration.

In our third set of simulations, we investigate the behavior of the model
parameter sensitivities, $\frac{dL}{d\tilde{\theta}_{i}}$, $i=3,\ldots ,6$,
across different PSA threshold settings. In particular, we study how the
sensitivity values change as we move from an optimal therapy setting towards
various suboptimal settings. For such, we let $\tilde{\theta}_{i}$, $%
i=3,\ldots ,6$, take the nominal values given in \cite{Liu2015} and vary the
values of the lower and upper PSA thresholds along $\left[ \tilde{\theta}%
_{1}^{\min },\tilde{\theta}_{1}^{\max }\right] $ and $\left[ \tilde{\theta}%
_{2}^{\min },\tilde{\theta}_{2}^{\max }\right] $, respectively.

Figs. \ref{fig: sensTH3_pat15}-\ref{fig: sensTH6_pat15} show how the values of the sensitivities, $\frac{dL}{d\tilde{\theta}_{i}}$, $i=3,\ldots ,6$,
vary as a function of the values of the lower and upper PSA thresholds ($\frac{dL}{d\tilde{\theta}_{1}}$ and $\frac{dL}{d\tilde{\theta}_{2}}$, respectively) for the model of Patient \#15.

\begin{figure}
[tbh]
\begin{center}
\includegraphics[natheight=3.69in,
natwidth=4.80in,
height=1.85in,
width=2.4in]{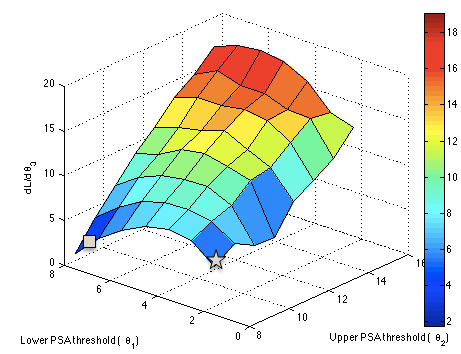}
\caption{Sensitivity of $\tilde{\theta}_{3}$ as a function of the values of $\tilde{\theta}_{1}$ and $\tilde{\theta}_{2}$ (Patient \#15)}
\label{fig: sensTH3_pat15}
\end{center}
\end{figure}

\begin{figure}
[tbh]
\begin{center}
\includegraphics[natheight=3.79in,
natwidth=4.92in,
height=1.9in,
width=2.46in]{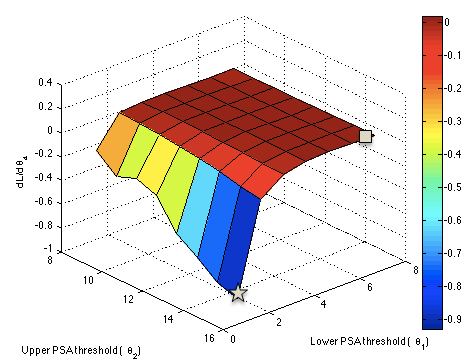}
\caption{Sensitivity of $\tilde{\theta}_{4}$ as a function of the values of $\tilde{\theta}_{1}$ and $\tilde{\theta}_{2}$ (Patient \#15)}
\label{fig: sensTH4_pat15}
\end{center}
\end{figure}

\begin{figure}
[tbh]
\begin{center}
\includegraphics[natheight=3.91in,
natwidth=4.83in,
height=1.96in,
width=2.42in]{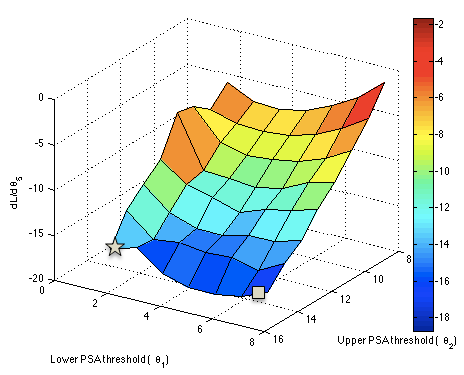}
\caption{Sensitivity of $\tilde{\theta}_{5}$ as a function of the values of $\tilde{\theta}_{1}$ and $\tilde{\theta}_{2}$ (Patient \#15)}
\label{fig: sensTH5_pat15}
\end{center}
\end{figure}

\begin{figure}
[tbh]
\begin{center}
\includegraphics[natheight=3.74in,
natwidth=4.91in,
height=1.87in,
width=2.46in]{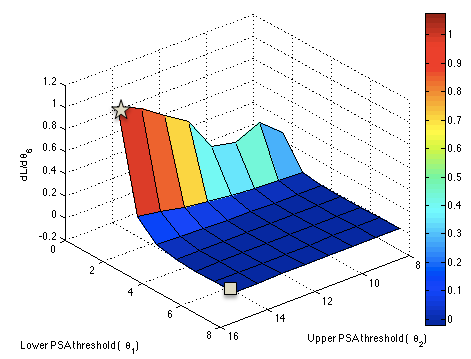}
\caption{Sensitivity of $\tilde{\theta}_{6}$ as a function of the values of $\tilde{\theta}_{1}$ and $\tilde{\theta}_{2}$ (Patient \#15)}
\label{fig: sensTH6_pat15}
\end{center}
\end{figure}

Figs. \ref{fig: sensTH3_pat1}-\ref{fig: sensTH6_pat1} show how the values of the sensitivities, $\frac{dL}{d\tilde{\theta}_{i}}$, $i=3,\ldots ,6$,
vary as a function of the values of the lower and upper PSA thresholds ($\frac{dL}{d\tilde{\theta}_{1}}$ and $\frac{dL}{d\tilde{\theta}_{2}}$, respectively) for the model of Patient \#1.

\begin{figure}
[tbh]
\begin{center}
\includegraphics[natheight=3.9in,
natwidth=4.92in,
height=1.95in,
width=2.46in]{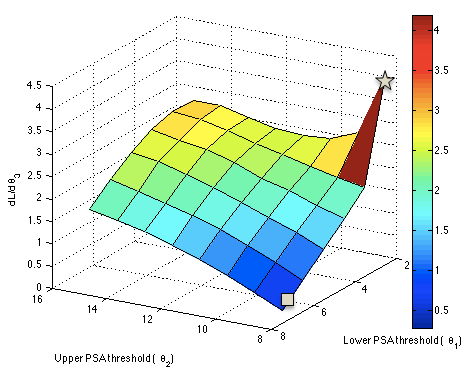}
\caption{Sensitivity of $\tilde{\theta}_{3}$ as a function of the values of $\tilde{\theta}_{1}$ and $\tilde{\theta}_{2}$ (Patient \#1)}
\label{fig: sensTH3_pat1}
\end{center}
\end{figure}

\begin{figure}
[tbh]
\begin{center}
\includegraphics[natheight=3.83in,
natwidth=5.17in,
height=1.9in,
width=2.59in]{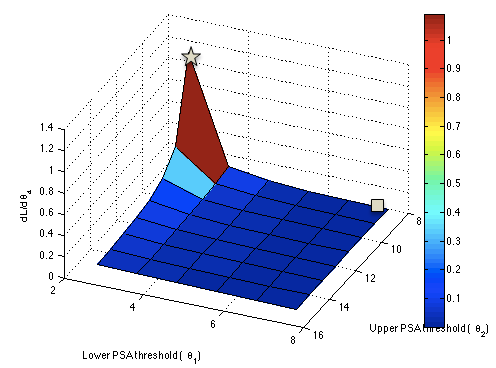}
\caption{Sensitivity of $\tilde{\theta}_{4}$ as a function of the values of $\tilde{\theta}_{1}$ and $\tilde{\theta}_{2}$ (Patient \#1)}
\label{fig: sensTH4_pat1}
\end{center}
\end{figure}

\begin{figure}
[tbh]
\begin{center}
\includegraphics[natheight=3.9in,
natwidth=4.81in,
height=1.95in,
width=2.41in]{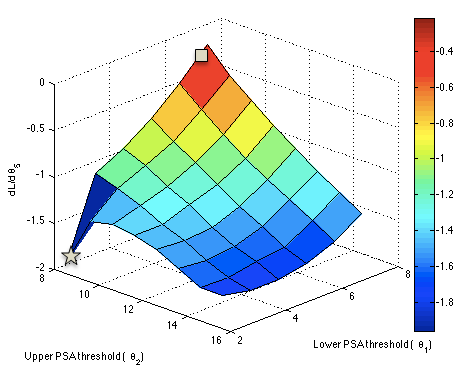}
\caption{Sensitivity of $\tilde{\theta}_{5}$ as a function of the values of $\tilde{\theta}_{1}$ and $\tilde{\theta}_{2}$ (Patient \#1)}
\label{fig: sensTH5_pat1}
\end{center}
\end{figure}

\begin{figure}
[tbh]
\begin{center}
\includegraphics[natheight=3.72in,
natwidth=4.97in,
height=1.86in,
width=2.49in]{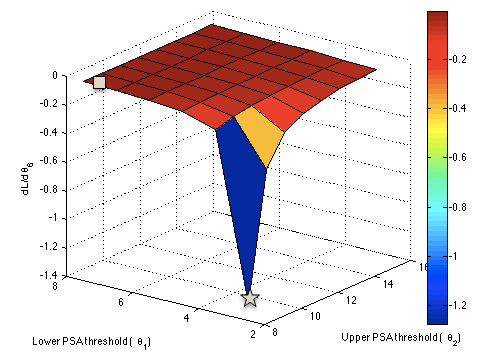}
\caption{Sensitivity of $\tilde{\theta}_{6}$ as a function of the values of $\tilde{\theta}_{1}$ and $\tilde{\theta}_{2}$ (Patient \#1)}
\label{fig: sensTH6_pat1}
\end{center}
\end{figure}

The above results lend themselves to the following discussion: first, the values of the model parameter sensitivities, $\frac{dL}{d\tilde{\theta}_{i}}$, $i=3,\ldots ,6$, are neither monotonically increasing nor monotonically decreasing along the allowable ranges of $\frac{dL}{d\tilde{\theta}_{1}}$ and $\frac{dL}{d\tilde{\theta}_{2}}$; this is verified for both patients. Second, the system is more sensitive to parameters $\tilde{\theta}_{3}$ and $\tilde{\theta}_{5}$ (HSC proliferation and apoptosis constants, respectively), and more robust to $\tilde{\theta}_{4}$ and $\tilde{\theta}_{6}$ (CRC proliferation and apoptosis constants, respectively); again this is verified across different patients. A possible explanation for this has to do with the fact that HSCs are commonly assumed to be the dominant subpopulation is a prostate tumor undergoing IAS therapy, which means that the size of the this subpopulation has a greater impact on the overall size of the tumor and, consequently, on the value of the PSA level.

Additionally, note that two points are marked in Figs. \ref{fig: sensTH3_pat15}-\ref{fig: sensTH6_pat1}: a star marks
the optimal therapy configuration and a square marks the values of $\tilde{%
\theta}_{1}$ and $\tilde{\theta}_{2}$ for which the sensitivities $\frac{dL}{%
d\tilde{\theta}_{i}}$, $i=3,\ldots ,6$, are minimal. In \cite{FleckNAHS2016} the optimal therapy configurations were found to be $\left[
\tilde{\theta}_{1}^{\ast },\tilde{\theta}_{2}^{\ast }\right] =\left[ 1.5,8.0%
\right] $ for Patient \#15 and $\left[ \tilde{\theta}_{1}^{\ast },\tilde{%
\theta}_{2}^{\ast }\right] =\left[ 2.5,8.0\right] $ for Patient \#1. As it
can be seen in Figs. \ref{fig: sensTH3_pat15}-\ref{fig: sensTH6_pat1}, these settings are not located in the regions
of minimum sensitivities. Of note, the sensitivities $\frac{dL}{d\tilde{%
\theta}_{i}}$, $i=3,\ldots ,6$, take their minimum value at the same
suboptimal configuration (namely $\left[ \tilde{\theta}_{1},\tilde{\theta}%
_{2}\right] =\left[ 7.5,8.0\right] $) across different patients. This could
potentially point to the existence of an underlying, and most likely as of
yet poorly understood, equilibrium of cancer cell subpopulation dynamics at
this suboptimal setting.

Moreover, the tradeoff between system fragility and optimality seems more
strongly applicable to $\tilde{\theta}_{1}$, and less so to $\tilde{\theta}%
_{2}$; interestingly, the value of $\tilde{\theta}_{1}^{\ast }$ differed
across patients, while $\tilde{\theta}_{2}^{\ast }$ did not. In this sense,
relaxing the optimality condition in favor of increased system robustness
could potentially be worthwhile in at least some cases. In fact, for Patient
\#1, moving from an optimal therapy setting to a slightly suboptimal setting
along $\tilde{\theta}_{2}$ (namely $\left[ \tilde{\theta}_{1},\tilde{\theta}%
_{2}\right] =\left[ 2.5,9.0\right] $) leads to a 9\% increase in the cost of
treatment. However, the model parameter sensitivities at this setting
decrease by approximately 30\% for $\tilde{\theta}_{3}$ and $\tilde{\theta}%
_{5}$ and by approximately 70\% for $\tilde{\theta}_{4}$ and $\tilde{\theta}%
_{6}$. If we move to a suboptimal setting along $\tilde{\theta}_{1}$ (namely
$\left[ \tilde{\theta}_{1},\tilde{\theta}_{2}\right] =\left[ 3.5,8.0\right] $%
), the cost increases by 16\%, while the sensitivities decrease by
approximately 50\% for $\tilde{\theta}_{3}$ and $\tilde{\theta}_{5}$ and by
approximately 90\% for $\tilde{\theta}_{4}$ and $\tilde{\theta}_{6}$. In
this case, it seems advantageous to tradeoff optimality
for increased robustness.

It is interesting to note that the above analysis is not consistently
verified across different patients. In fact, for Patient \#15, a marked
decrease in system fragility only occurs when we move to a suboptimal
setting along $\tilde{\theta}_{1}$ (namely $\left[ \tilde{\theta}_{1},\tilde{%
\theta}_{2}\right] =\left[ 7.5,8.0\right] $), at which point the
sensitivities decrease by approximately 70\% for $\tilde{\theta}_{3}$ and $%
\tilde{\theta}_{5}$ and by approximately 99\% for $\tilde{\theta}_{4}$ and $%
\tilde{\theta}_{6}$. However, there is an increase in the cost value of the
order of 70\%, which indicates that system optimality is significantly
compromised. These results highlight the importance of applying our methodology on a patient-by-patient basis. More generally, they validate recent efforts favoring the development of \emph{personalized} cancer therapies, as opposed to traditional treatment schemes that are typically generated over a cohort of patients and thus effective only on average.

\section{Conclusions}
\label{Conclusions}
We use a stochastic model of prostate cancer evolution under IAS therapy to perform sensitivity analysis with respect to several important model parameters. We find the system to be more sensitive to changes in the HSC proliferation and apoptosis constants than changes in the CRC proliferation and apoptosis constants. We also identify a set of model parameter values for which the simulated IAS therapy scheme is essentially curative, as well as a set of model parameters for which the prostate tumor grows in an uncontrollable manner. Finally, we verify that relaxing optimality in favor of increased system stability can potentially be of interest in at least some cases.

This work is a first attempt at investigating the tradeoff between optimality and system robustness/fragility in stochastic models of cancer evolution. A subset of all model parameters is selected and a case study of prostate cancer is used to illustrate the applicability of our IPA-based methodology. Nevertheless, there exist several other  potentially critical parameters in the SHA model of prostate cancer evolution we study, so that part of our ongoing work includes extending this sensitivity analysis study to other model parameters. Additionally, future work includes applying this methodology to other types of cancer (e.g., breast cancer), as well as other diseases that are known to progress in stages (e.g., tuberculosis).
\vspace{-1mm}
\bibliographystyle{is-plain}
\bibliography{CHAbib}
\end{document}